\documentclass[english,10pt,reqno,twoside]{amsart}
\usepackage[T1]{fontenc}
\usepackage[latin9]{inputenc}
\usepackage{geometry}
\usepackage{amsthm}
\usepackage{amssymb}
\usepackage{tikz-cd}
\usepackage{theoremref}

\usepackage{bbm}
\usepackage{color}

\makeatletter

\numberwithin{equation}{section}
\numberwithin{figure}{section}
\theoremstyle{plain}
\newtheorem{thm}{\protect\theoremname}
\theoremstyle{plain}

\newtheorem{lem}[thm]{\protect\lemmaname}
\newtheorem{cor}[thm]{\protect\corollaryname}

\theoremstyle{definition}

\newtheorem{defn}[thm]{\protect\defnname}

\newtheorem{rem}{\protect\remarkname}

\makeatother

\usepackage{lmodern}    
\usepackage[english]{babel}

\providecommand{\propositionname}{Proposition}
\providecommand{\lemmaname}{Lemma}
\providecommand{\theoremname}{Theorem}
\providecommand{\corollaryname}{Corollary}
\providecommand{\notationname}{Notation}
\providecommand{\conventionname}{Convention}
\providecommand{\exname}{Example}
\providecommand{\defnname}{Definition}
\providecommand{\questionname}{Question}
\providecommand{\remarkname}{Remark}

\usepackage{graphicx}
\usepackage[all]{xy}
\usepackage{paralist}
\usepackage{subfigure}
\usepackage{multicol}
\usepackage{comment}

\usepackage{xcolor}
\definecolor{Chocolat}{rgb}{0.36, 0.2, 0.09}
\definecolor{BleuTresFonce}{rgb}{0.215, 0.215, 0.86}
\usepackage[colorlinks,final,hyperindex,pagebackref=true]{hyperref}
\hypersetup{
	pdftex,
    linkcolor=blue,
	citecolor=purple,
	filecolor=BleuTresFonce,
	urlcolor=BleuTresFonce
}

\usepackage[capitalise]{cleveref}
\usepackage{placeins}

\usepackage{tikz, tkz-tab, tikz-cd}
\usepackage{mathtools}

\usepackage[new]{old-arrows}

\theoremstyle{plain}
\newtheorem{mainthm}{Theorem}

\numberwithin{equation}{section}

\newcommand{\A}{\mathbf{A}}
\newcommand{\C}{\mathbf{C}}

\newcommand{\G}{\mathbf{G}}
\newcommand{\N}{\mathbf{N}}
\newcommand{\oo}{\mathbf{0}}
\newcommand{\Q}{\mathbb{Q}}
\newcommand{\R}{\mathbf{R}}

\newcommand{\Z}{\mathbf{Z}}

\newcommand{\ba}{\mathbf{a}}
\newcommand{\bb}{\mathbf{b}}

\newcommand{\Tor}{\operatorname{Tor}}
\newcommand{\Spec}{\operatorname{Spec}}
\newcommand{\Sing}{\operatorname{Sing}}

\usetikzlibrary{decorations.markings}
\tikzstyle{vertex}=[circle, draw, inner sep=0pt, minimum size=5pt]

\begin{document}

\title[Generalized Zariski cancellation for Brieskorn--Pham varieties]{Generalized Zariski cancellation for Brieskorn--Pham varieties}

\author[B. Hajra]{Buddhadev Hajra}
\address{Indian Statistical Institute, Kolkata, Stat-Math Unit, 203 B. T. Road, Baranagar, Kolkata - 700108, India}
\email{hajrabuddhadev92@gmail.com}

\author[M. Upmanyu]{Mohit Upmanyu}
\address{Simion Stoilow Institute of Mathematics of the Romanian Academy, 21 Calea Grivitei Street, 010702 Bucharest, Romania}
\email{mohit.i.upmanyu@gmail.com}

\subjclass[2020]{14F35, 14F45, 14J10, 55P20, 55R10}

\keywords{Brieskorn--Pham singularities, Hilbert series, generalized Zariski cancellation, algebroid spaces}

\begin{abstract}
We establish a generalized Zariski cancellation theorem for Brieskorn--Pham varieties over the field of complex numbers. More precisely, we show that if two complex Brieskorn--Pham varieties become isomorphic after taking a product with an arbitrary separated complex scheme having a smooth point, then they are already isomorphic not merely as complex algebraic varieties but, in fact, as $\C^*$-varieties. The proof combines our general cancellation theorem for complex algebraic varieties with a unique singularity, whose proof relies on the analytic cancellation theorem of Hauser--M\"uller, with an exponent rigidity theorem for Brieskorn--Pham varieties. The latter asserts that, over any field of characteristic zero, the exponent tuple appearing in the defining equation completely determines the isomorphism class of the corresponding Brieskorn--Pham variety.
\end{abstract}

\maketitle

\tableofcontents

\section{\bf Introduction}\label{se1}
The central theme of this article concerns the generalized Zariski cancellation problem. Given algebraic varieties $X$, $Y$, and $Z$ over a field $k$, this problem asks whether
\[
X\times Z \cong_k Y\times Z
\implies
X\cong_k Y
\]
as $k$-varieties. The classical Zariski cancellation problem corresponds to the special case $Z=\A^1_k$. The behaviour of cancellation is remarkably subtle. While positive results are known in several situations---for instance, Fujita \cite{Fuj1979} proved cancellation for the affine plane $\A^2_k$ over any field of characteristic zero---the answer is negative in general. In \cite{Dan1989}, Danielewski introduced a family of complex hypersurfaces
\[
D_n:=V(x_1^n x_2+x_3^2+1)\subseteq \A^3_\C,
\]
showing that $D_1\times\A^1_\C\cong_\C D_n\times\A^1_\C$ for every $n\in\N$, i.e., all $\A^1_\C$-cylinders over the $D_n$ are isomorphic. Motivated by Zariski cancellation, one may ask whether the surfaces $D_n$ are pairwise non-isomorphic. Fieseler \cite{Fie1994} answered this affirmatively by distinguishing them via their homology groups at infinity. Consequently, the family $\{D_n\}_{n\in\N}$ provides counterexamples to the classical Zariski cancellation problem.

Over $\C$, cancellation has also been studied in complex analytic geometry with respect to analytic isomorphism equivalence. In the global analytic setting, the answer is negative in general: Shioda \cite{Shi1977} constructed pairwise non-isomorphic elliptic curves $E$, $E_1$, and $E_2$ such that $E_1\times E\cong E_2\times E$. On the other hand, in the category of complete varieties, $X\times Z\cong Y\times Z$ entails $X\cong Y$, provided that $Z$ is projective and either the pair $(X,Z)$ or the pair $(Y,Z)$ is Picard-independent (see \cite{Fuj1981}). By contrast, in the local analytic setting, Hauser and M\"uller \cite[Theorem~1]{HM1990} established generalized cancellation for complex analytic space germs. This result, together with their unique factorization theorem for algebroid spaces (see Section~\ref{subse2.3}), forms a key ingredient of the present work.

The first main result of this article establishes a generalized cancellation theorem for affine varieties admitting good
$\C^*$-actions and having an isolated singularity.

\begin{mainthm}[{cf. Theorem \ref{Thm: Our main cancellation theorem for C*-varieties}}]\label{thm:mainA}
Let $R$ and $S$ be positively graded affine domains over $\C$. Let $V:=\Spec R$ and $W:=\Spec S$ be the corresponding complex affine irreducible varieties admitting good $\C^*$-actions with vertices $p$ and $q$, respectively. Assume that $p$ and $q$ are the unique singularities of $V$ and $W$, respectively. Let $Z$ be a separated scheme over $\C$ (not necessarily connected) having a smooth point such that
\[
V\times Z \cong_{\C} W\times Z.
\]
Then $V$ and $W$ are isomorphic as complex affine varieties (not necessarily as $\C^*$-varieties).
\end{mainthm}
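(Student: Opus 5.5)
We reduce the global statement to a local analytic one at the vertices, apply the Hauser--M\"uller cancellation theorem \cite[Theorem~1]{HM1990}, and then pass back from local to global using the good $\C^*$-actions.

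\textbf{Locating the vertices.} Fix an isomorphism $\Phi\colon V\times Z\to W\times Z$ and a smooth point $z_0\in Z$; after shrinking $Z$ to the connected component of $z_0$, we may assume $Z$ is connected at $z_0$. The singular locus of $V\times Z$ near $(p,z_0)$ is $\{p\}\times Z_{\mathrm{sm}}$ locally, since $V$ has the unique singularity $p$ and $Z$ is smooth near $z_0$. Hence $\Phi(p,z_0)$ lies in $\Sing(W\times Z)$. Near smooth points of $Z$ this singular locus is $\{q\}\times Z$, so we may choose $z_0$ with $\Phi(p,z_0)=(q,z_1)$ and $z_1$ a smooth point of $Z$. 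If $\Phi(p,z_0)$ lands over a singular point of $Z$, we instead use the open set of smooth points of $Z$ near $z_0$ and perturb $z_0$ inside it: $\Phi$ maps $\{p\}\times U$ into $\Sing(W\times Z)$, and for generic $z_0$ the image avoids $W\times\Sing Z$. This perturbation is the first technical point, and a dimension count on the irreducible component should settle it.

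\textbf{Local cancellation.} Passing to complete local rings, or to analytic germs, we get
\[
(V,p)\times(\C^d,0)\cong (V\times Z,(p,z_0))\cong (W\times Z,(q,z_1))\cong (W,q)\times(\C^{d'},0),
\]
where $d$ and $d'$ are the local dimensions of $Z$ at $z_0$ and $z_1$. Comparing embedding dimensions and dimensions of the singular loci gives $d=d'$, because $\dim V=\dim W$ follows from $\dim(V\times Z)=\dim(W\times Z)$ computed near these points. Hauser--M\"uller's cancellation theorem then yields $(V,p)\cong(W,q)$ as analytic germs, equivalently $\widehat{R}\cong\widehat{S}$, where the completions are taken at the irrelevant maximal ideals.

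\textbf{Local to global.} For positively graded affine domains, an isomorphism of completions at the vertex implies a graded isomorphism after possibly regrading. This is the main obstacle. We plan to use that $\widehat{R}\cong\widehat{S}$ carries the $\C^*$-action of $S$ to a formal $\C^*$-action, i.e.\ a grading or an Euler-type derivation, on $\widehat{R}$. By a standard linearization argument for reductive group actions on complete local rings (conjugacy of maximal tori in the formal automorphism group, in the spirit of the uniqueness of good $\C^*$-actions up to conjugation), the two formal $\C^*$-actions become conjugate as actions of possibly different one-parameter subgroups of a torus in $\mathrm{Aut}(\widehat{R})$. Taking the subring of $\widehat{R}$ of elements that are finite sums of weight vectors for the common torus recovers both $R$ and $S$, since $R=\bigoplus R_n$ is exactly the set of finite sums of eigenvectors in $\widehat{R}=\prod R_n$ when all weights are positive. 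This gives $R\cong S$ as $\C$-algebras, though possibly with different gradings, which explains why the theorem only asserts an isomorphism of varieties rather than of $\C^*$-varieties. If the torus-conjugacy step proves too delicate, the fallback is the algebroid unique factorization theorem of Section~\ref{subse2.3}, combined with Artin approximation to algebraize.
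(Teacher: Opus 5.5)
Your Step~1 has a genuine gap. The claim that one can choose a smooth $z_0\in Z$ with $\Phi(p,z_0)=(q,z_1)$ and $z_1$ smooth is false in general, and no dimension count can rescue it. Take $V=W=Z=\{x^2+y^2+z^2=0\}\subset\C^3$ and let $\Phi(v,z)=(z,v)$ be the factor swap. The smooth points of $Z$ are exactly $V\setminus\{p\}$, and for every such $z_0$ you get $\Phi(p,z_0)=(z_0,p)\in W^{\rm sm}\times\Sing(Z)$. So $\Phi(\{p\}\times Z^{\rm sm})$ equals $W^{\rm sm}\times\Sing(Z)$ and never meets $\{q\}\times Z$ over a smooth point of $Z$. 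Here $m=n$ and $n+\dim\Sing(Z)\ge m$, so generic avoidance has no room to work. The conclusion $V\cong W$ is trivial in this example, but it shows your argument cannot exclude the configuration $\Phi(p,z_0)=(y_1,z_1)$ with $y_1\in W^{\rm sm}$ and $z_1\in\Sing(Z)$, which is exactly the hard case. Separately, shrinking $Z$ to a connected component is not legitimate, since $\Phi$ may send $V\times Z_c$ onto $W\times Z_{c'}$ with $c'\neq c$.

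The missing idea, which the paper supplies, is to follow this case rather than avoid it. Suppose $\Phi(p,z_{i-1})=(y_i,z_i)$ with $y_i$ smooth. Then the isomorphism $\widehat{(V,p)}^{\,i}\times\widehat{(\C^{m-n(i-1)},\oo)}\cong\widehat{(\C^n,\oo)}\times\widehat{(Z,z_i)}$, combined with Hauser--M\"uller cancellation, unique factorization (Theorem~\ref{Thm: H-M_Unique factorization for algebroid spaces}) and Lemma~\ref{Lem: Isolated implies indecomposable}, forces $m\ge in$ and $\widehat{(Z,z_i)}\cong\widehat{(V,p)}^{\,i}\times\widehat{(\C^{m-in},\oo)}$. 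Hence the chain stops after $\ell\le\lfloor m/n\rfloor$ steps at a point with $\Phi(p,z_\ell)=(q,w_\ell)$. There $(Z,z_\ell)$ and $(Z,w_\ell)$ are in general singular, so cancelling smooth factors, the only tool in your Step~2, is not enough. You need unique factorization to extract $\widehat{(W,q)}\cong\widehat{(V,p)}$ from $\widehat{(V,p)}^{\,\ell+1}\times\widehat{(\C^{m-n\ell},\oo)}\cong\widehat{(W,q)}\times\widehat{(Z,w_\ell)}$. You invoke unique factorization only as a fallback for the local-to-global step, but it is really needed here. Your Step~3 sketches what the paper imports as Gurjar's theorem (Lemma~\ref{Lem: Gurjar's result}). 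Recovering $R$ as the finite vectors of a positive one-parameter subgroup is sound, but conjugacy of maximal tori in $\mathrm{Aut}(\widehat R)$ is a substantial result you would have to cite, not call standard.
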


A particularly striking application of Theorem~\ref{thm:mainA} concerns Brieskorn--Pham varieties.

\medskip

Let $k$ be a field of characteristic zero, not necessarily algebraically closed. For $n\geq 2$ and an $n$-tuple $\ba=(a_1,\ldots,a_n)\in(\N_{\geq 2})^n$, consider the affine $k$-algebra
\[
B_{\ba,k}:=
\frac{k[X_1,\ldots,X_n]}{(X_1^{a_1}+\cdots+X_n^{a_n})},
\]
called a \emph{Brieskorn--Pham hypersurface ring over $k$}. The associated affine hypersurface
\[
V_{\ba,k}:=\Spec(B_{\ba,k})\subseteq \A^n_k
\]
is called the \emph{Brieskorn--Pham variety over $k$} corresponding to the exponent tuple $\ba$. We sometimes denote the defining polynomial by $f_\ba$, often referred to as the \emph{Brieskorn polynomial} associated to $\ba$. Throughout this article, we may omit the phrase ``over $k$'' whenever the ground field is clear from the context and denote the corresponding variety simply by $V_\ba$ instead of $V_{\ba,k}$.

By the Jacobian criterion, $V_{\ba,k}$ has a unique singular point at the origin, denoted by $\oo_{\ba,k}$. Moreover, $V_{\ba,k}$ is normal whenever $n\geq 3$, whereas it is non-normal when $n=2$. Brieskorn--Pham varieties need not be irreducible; for example, $V_{(2,2),k}$ is reducible whenever $k\supseteq \Q(i)$.

When $k$ is not algebraically closed, it is often more natural to regard $V_{\ba,k}$ as the affine $k$-scheme $\Spec(B_{\ba,k})$ rather than through its set of $k$-rational points, since the latter may be very small. For instance, if $k=\R$, then
\[
V_{(2,2,\ldots,2),\R}
=\Spec\left(
\frac{\R[X_1,\ldots,X_n]}
{(X_1^2+\cdots+X_n^2)}
\right)
\]
has a unique $\R$-rational point, namely the origin. Accordingly, throughout this article, by a \emph{variety} over a field $k$ we mean a reduced separated scheme of finite type over $k$, and irreducibility will be imposed only when explicitly stated. In particular, we shall continue to refer to $\Spec(B_{\ba,k})$ as a Brieskorn--Pham variety.

When $k=\C$, Brieskorn--Pham varieties form a classical family of isolated hypersurface singularities that have been extensively studied from topological, differential-geometric, analytic, metric, and motivic viewpoints. Their links, Milnor fibers, monodromy operators, mixed Hodge structures, and related invariants play prominent roles in singularity theory.

Our next main result shows that Brieskorn--Pham varieties satisfy a strong form of generalized Zariski cancellation. Indeed, cancellation by any complex algebraic variety possessing a smooth point completely determines the exponent tuple. More precisely:

\begin{mainthm}[{cf. Theorem~\ref{Thm: Our main cancellation theorem for B-P varieties}}]\label{thm:mainB}
Let $n\geq 2$, and let $V_\ba$ and $V_\bb$ be two Brieskorn--Pham varieties defined over $\C$ corresponding to two tuples
$\ba,\bb\in(\N_{\geq 2})^n$.
Then, for any separated $\C$-scheme $Z$ (not necessarily connected) having a smooth point, the following are equivalent:
\begin{enumerate}[\indent\rm(1)]
    \item $V_\ba\times Z\cong_{\C} V_\bb\times Z$;
    \item $V_\ba\cong_{\C} V_\bb$;
    \item $V_\ba$ and $V_\bb$ are isomorphic as $\C^*$-varieties;
    \item $\ba \sim \bb$;
\end{enumerate}
where $\ba\sim\bb$ denotes equality up to permutation of entries.
\end{mainthm}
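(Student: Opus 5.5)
The plan is to prove the cycle of implications (4)$\Rightarrow$(3)$\Rightarrow$(2)$\Rightarrow$(1)$\Rightarrow$(2)$\Rightarrow$(4), in which only two steps carry real content.

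\emph{The easy implications.} For (4)$\Rightarrow$(3), a permutation of the coordinates $X_i$ carries $f_\ba$ to $f_\bb$. Under the natural weighted actions, $t\cdot x_i=t^{L/a_i}x_i$ with $L=\mathrm{lcm}(a_i)$, this permutation is $\C^*$-equivariant, since the weights are permuted together with the exponents. The implications (3)$\Rightarrow$(2) and (2)$\Rightarrow$(1) are immediate.

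\emph{(1)$\Rightarrow$(2).} Here I would apply Theorem~\ref{thm:mainA}. The ring $B_\ba$ is positively graded, with $\deg X_i=L/a_i$, and the origin is its vertex and unique singular point. For $n\ge 3$, $B_\ba$ is a domain: a Brieskorn polynomial in at least three variables is irreducible, for instance because $V_\ba$ is normal and the complement of the origin is connected. So Theorem~\ref{thm:mainA} applies directly. The case $n=2$ does not fit, because $V_\ba$ can be reducible and non-normal. Here I would not invoke Theorem~\ref{thm:mainA} as a black box. Instead I would rerun its proof: take a smooth point $z\in Z$, compare the complete local rings at the singular points of $V_\ba\times Z$ and $V_\bb\times Z$, and use the Hauser--M\"uller cancellation for algebroid germs to cancel the smooth factor. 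This gives $\widehat{\mathcal O}_{V_\ba,\oo}\cong\widehat{\mathcal O}_{V_\bb,\oo}$.

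Rather than passing to a global isomorphism at this stage, I would go straight to (4). The curve $x^{a_1}+y^{a_2}=0$ is a plane curve germ whose analytic type is determined by its branch structure, namely $\gcd(a_1,a_2)$ branches with Puiseux data $a_1/d,a_2/d$. It is also determined by its Milnor number and multiplicity. These invariants recover $\{a_1,a_2\}$. Since this route yields (4) directly, for $n=2$ one can prove (1)$\Rightarrow$(4) and bypass (2).

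\emph{(2)$\Rightarrow$(4), the main obstacle.} This is the exponent rigidity statement. First, an abstract isomorphism $V_\ba\cong V_\bb$ must send the unique singular point to the unique singular point. It therefore induces an isomorphism of complete local rings $\C[[X]]/(f_\ba)\cong\C[[X]]/(f_\bb)$. I would then extract invariants of this complete local ring that determine $\ba$ up to permutation:
\begin{itemize}
\item The Tjurina algebra equals the Milnor algebra, because $f_\ba$ is quasihomogeneous. It is isomorphic to $\bigotimes_i\C[x_i]/(x_i^{a_i-1})$, and its isomorphism class is an invariant.
\item The Hilbert series of this Milnor algebra with respect to its $\mathfrak m$-adic filtration is $\prod_i(1+t+\cdots+t^{a_i-2})$. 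Over $\C[t]$ this factors into cyclotomic pieces.
\item From this product one must recover the multiset $\{a_i-1\}$.
\end{itemize}
The catch is that the product $\prod_i[a_i-1]_t$ does not in general determine the multiset; for example $[2][6]=[3][4]$ as polynomials, up to checking. So I would add a finer invariant. The Milnor algebra $M=\C[x]/(x_i^{a_i-1})$ has the graded module structure of its associated graded ring. The dimensions $\dim\mathfrak m^k/(\mathfrak m^{k+1}+(\text{socle-type ideals}))$, or the Hilbert functions of the quotients $M/\mathrm{Ann}(\mathfrak m^j)$, should separate the exponents. A cleaner approach is to use the theorem of Saito and Mather--Yau. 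Since $f_\ba$ is quasihomogeneous, the weights $1/a_i$ are analytic invariants of the germ; this is Saito's result that the weights of a quasihomogeneous isolated singularity are determined by the analytic type. That yields $\{a_i\}$ directly.

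I expect this last invariance step to be the crux. I would first try the Saito weight-invariance result. If that is unavailable, I would fall back on a Hilbert-series argument in the associated graded of the Milnor algebra, refined by annihilator filtrations.
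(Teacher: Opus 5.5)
Your proof is correct, but both substantive steps take a heavier route than the paper's, and one claim you make is false. The easy implications, including the equivariance check for (4)$\Rightarrow$(3), are exactly the paper's.

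\textbf{Rigidity step.} Your first idea is precisely the paper's argument (Theorem~\ref{Thm: Exponent rigidity of B-P variety} via Lemma~\ref{Lem: Key Lemma}). That idea is the Milnor ($=$ Tjurina) algebra $\bigotimes_i\C[x_i]/(x_i^{a_i-1})$ together with its $\mathfrak m$-adic Hilbert series $\prod_i[a_i-1]_t$, where $[c]_t:=1+t+\cdots+t^{c-1}$. You discarded it for a wrong reason: $[2]_t[6]_t\neq[3]_t[4]_t$. The two sides have degrees $6$ and $5$; only their values at $t=1$ agree.

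Since $n$ is fixed, you have $(1-t)^n\prod_{i=1}^n[c_i]_t=\prod_{i=1}^n(1-t^{c_i})$. Its lowest non-constant term is $-p\,t^{c}$, where $c$ is the least entry and $p$ its multiplicity. Cancelling $(1-t^c)^p$ and inducting recovers the whole multiset. This includes the entries $c_i=1$ (i.e.\ $a_i=2$), which are invisible in $\prod_i[c_i]_t$ itself. So no annihilator refinement is needed.

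Your substitute is Saito's theorem that the weights in $(0,\tfrac{1}{2}]$ of a quasihomogeneous isolated hypersurface singularity are analytic invariants. It does apply here: $w_i=1/a_i\le\tfrac{1}{2}$, and for quasihomogeneous germs contact equivalence reduces to right equivalence. So your proof closes. But Saito's theorem is a deep, purely analytic black box. The paper's Hilbert-series argument is elementary, works verbatim over any field of characteristic $0$, and transfers to the analytic and formal local rings (Remark~\ref{Rem 1}).

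\textbf{Cancellation step.} Routing (1)$\Rightarrow$(2) through Theorem~\ref{thm:mainA} brings in Gurjar's algebraization. It also forces a separate treatment of $n=2$, where $B_\ba$ need not be a domain. Your multiplicity/Milnor-number argument there, via $\min(a_1,a_2)$ and $(a_1-1)(a_2-1)$, is fine.

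The paper needs neither. Theorem~\ref{Thm: Our main cancellation theorem} only requires connected varieties with a unique singular point, which $V_\ba,V_\bb$ are for every $n\ge 2$. It yields $V^{\rm an}_\ba\cong V^{\rm an}_\bb$ directly, and Corollary~\ref{Cor: Exponent rigidity of B-P variety in analytic setting over complex numbers} then gives (4). Your rigidity argument only uses the local ring at the singular point anyway, so the global isomorphism (2) is never needed, as you already noticed for $n=2$.

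Note also that ``cancel the smooth factor'' is not immediate when $Z$ is singular. The image $\varphi(\oo_\ba,z_0)$ may lie over a smooth point of $V_\bb$. That is why the paper follows the chain $z_0,z_1,\dots$ and invokes Hauser--M\"uller unique factorization. You should cite that theorem rather than sketch a naive version of it.
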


When $k=\C$, we also consider the associated analytic germ and formal completion at the singular point $\oo_{\ba}$, namely
\[
V^{\rm an}_\ba:=(V_\ba,\oo_\ba)
\quad \text{and} \quad
\widehat{V}_\ba:=\widehat{(V_\ba,\oo_\ba)},
\]
respectively defined by the local algebras
\[
B^{\rm an}_\ba
=
\frac{\C\{X_1,\ldots,X_n\}}{(X_1^{a_1}+\cdots+X_n^{a_n})}
\quad \text{and} \quad
\widehat{B}_\ba
=
\frac{\C[\![X_1,\ldots,X_n]\!]}{(X_1^{a_1}+\cdots+X_n^{a_n})}.
\]

The extent to which the exponent tuple can be recovered from geometric structures associated with a Brieskorn--Pham variety is a natural and subtle question. Several rigidity phenomena of this type are already known. According to Milnor \cite{Mil1968}, every isolated hypersurface singularity possesses a well-defined embedded topological type and an associated algebraic knot. For complex Brieskorn--Pham singularities, a theorem of Yoshinaga--Suzuki \cite{YS1978} (see also \cite[Corollary]{Yos1983}) shows that the topological type determines the exponent tuple: if the germs $(V_{\ba},\oo_{\ba})$ and $(V_{\bb},\oo_{\bb})$ are homeomorphic, then $\ba\sim\bb$. In \cite{BS2011}, the authors proved that two Brieskorn polynomials $f_\ba$ and $f_\bb$ have cobordant algebraic knots if and only if $\ba\sim\bb$, provided that no exponent is a multiple of another for either polynomial. More recently, Fern{\'a}ndez--Jelonek--Sampaio \cite{FJS2024_arxiv} showed that bi-Lipschitz equivalence of the analytic germs $(V_{\ba},\oo_{\ba})$ and $(V_{\bb},\oo_{\bb})$ forces the smallest exponents of $\ba$ and $\bb$ to coincide and occur with the same multiplicity. Over the real numbers, Campesato \cite{Cam2018} proved that the arc-analytic equivalence class of a Brieskorn polynomial determines its exponents, describing this result as an analogue of the Yoshinaga--Suzuki theorem over $\R$.

These results should be contrasted with the behaviour of substantially weaker invariants. The topology of the link alone does not determine the exponent tuple. Indeed, Yoshinaga exhibited infinite families of Brieskorn--Pham singularities with pairwise distinct exponent tuples whose links are all homeomorphic to topological spheres (see \cite[Examples~1,~2]{Yos1983}). Since the smooth locus $V_{\ba}^{\mathrm{sm}}:=V_{\ba}\setminus \{\oo_{\ba}\}$ is diffeomorphic to $L_{\ba}\times(0,\infty)$, where $L_{\ba}$ denotes the link of the singularity $(V_{\ba},\oo_{\ba})$, the smooth locus and the link have the same homotopy type. Consequently, neither the topology of the link nor the homotopy type of the smooth locus determines the exponent tuple in general.

Motivated by these rigidity phenomena, we show that the exponent tuple is also completely determined by the algebraic, analytic, and formal structures naturally associated with a Brieskorn--Pham variety. More precisely, a key ingredient in the proof of Theorem~\ref{thm:mainB} is the following exponent rigidity theorem.

\begin{mainthm}[{cf. Theorem~\ref{Thm: Exponent rigidity of B-P variety}, Corollary~\ref{Cor: Exponent rigidity of B-P variety in analytic setting over complex numbers}}]\label{thm:mainC} Let $n\geq 2$ and let $\ba,\bb\in(\N_{\geq 2})^n$. \begin{enumerate}[\indent\rm(1)] \item Over a field $k$ of characteristic $0$, \[ B_{\ba,k} \cong_{k\text{-alg}} B_{\bb,k} \iff V_{\ba,k} \cong_k V_{\bb,k} \iff \ba\sim\bb, \] where $\ba\sim\bb$ denotes equality up to permutation of entries. \item If $k=\C$, then \[ \widehat{B}_\ba \cong_{\C\text{-alg}} \widehat{B}_\bb \iff \widehat{V}_\ba \cong_{\rm bihol} \widehat{V}_\bb \iff V_\ba^{\rm an} \cong_{\rm bihol} V_\bb^{\rm an} \iff \ba\sim\bb. \] \end{enumerate} \end{mainthm}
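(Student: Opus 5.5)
The plan is to reduce everything to the formal (or analytic) local algebra at the singular point and read off the exponent tuple from a formal invariant. The implications ``$\ba\sim\bb\Rightarrow$ isomorphic'' are trivial in every case, since permuting variables gives isomorphisms. Conversely, a $k$-isomorphism $V_{\ba,k}\cong V_{\bb,k}$ is the same as a $k$-algebra isomorphism $B_{\ba,k}\cong B_{\bb,k}$. Because the origin is the unique singular point, any such isomorphism carries $\oo_\ba$ to $\oo_\bb$; the singular locus is defined intrinsically by the Jacobian ideal and is preserved by isomorphisms. Hence it induces an isomorphism of completed local rings $\widehat{B}_{\ba,k}\cong\widehat{B}_{\bb,k}$. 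Base changing to $\bar k$ (or to $\C$, after a Lefschetz-principle reduction to a finitely generated subfield embedded in $\C$), part (1) reduces to the formal statement in part (2). In part (2), analytic isomorphism of germs implies formal isomorphism by completion. So the whole theorem reduces to one claim: $\widehat{B}_\ba\cong\widehat{B}_\bb$ as $\C$-algebras implies $\ba\sim\bb$.

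For the formal claim I would use two ingredients. The first is the Hauser--M\"uller unique factorization for algebroid spaces from Section~\ref{subse2.3}. The second is a contact-invariant computation. Reorder so that $a_1\le\dots\le a_n$.

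The formal isomorphism class of $\widehat B_\ba$ determines the contact class of $f_\ba$ in $\C[\![X_1,\dots,X_n]\!]$ (up to adding squares of extra variables if embedding dimensions differ). Here embedding dimension is $n$ minus the number of $a_i=2$ when at least one $a_i\ge 3$, and a coordinate of exponent $2$ splits off as a non-degenerate quadratic summand.

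From the contact class I read off the following invariants of $f_\ba$:
\begin{itemize}
\item the Tjurina algebra $\C[\![X]\!]/(f,\partial f)$, which by quasihomogeneity (Saito) equals the Milnor algebra $\bigotimes_i \C[X_i]/(X_i^{a_i-1})$;
\item in particular the Milnor number $\prod(a_i-1)$;
\item more finely, the isomorphism class of this Artinian algebra.
\end{itemize}
Saito's theorem also shows that the weights $1/a_i$ of the quasihomogeneous structure are analytic invariants of an isolated quasihomogeneous singularity. I would invoke this: the multiset of weights $\{1/a_i\}$ is determined by the local algebra, which recovers $\ba$ up to permutation.

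To make the argument more self-contained, I would alternatively derive the multiset $\{a_i-1\}$ from the Artinian algebra $A=\bigotimes_i\C[X_i]/(X_i^{a_i-1})$. Its Hilbert function with respect to the $\mathfrak m$-adic filtration, $\prod_i (1+t+\dots+t^{a_i-2})$, is an intrinsic invariant of $A$. By unique factorization of products of cyclotomic-type polynomials $\frac{1-t^{m}}{1-t}$ this does not immediately determine the $m$'s (for example, $[2][3]$-type coincidences). So I would instead compute $\dim_\C \mathfrak m^j/\mathfrak m^{j+1}$ together with the socle degree and the annihilator structure. The cleanest route is the invariants $\dim A/\mathfrak m^{j}$, combined with the number of generators of minimal order.

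I expect this recovery of $\ba$ from intrinsic local data to be the main obstacle. My first attempt would be an induction that peels off the smallest exponent. Let $a_1$ be the smallest exponent; then $a_1$ equals the multiplicity (order) of $f$, which is a formal invariant of $\widehat B_\ba$. The number of indices with $a_i=a_1$ is the rank of the tangent cone's degree-$a_1$ form, and the tangent cone $\mathrm{gr}_\mathfrak m\widehat B_\ba$ is intrinsic. I would then use the Hauser--M\"uller theory, or Saito's weight invariance, to pass to the remaining exponents. If the peeling becomes messy, I would fall back to quoting Saito's result that the weights of a quasihomogeneous isolated singularity are determined by its analytic type.
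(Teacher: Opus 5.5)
Your reductions are sound: an isomorphism must fix the unique singular point, and for part (1) the Lefschetz descent works if applied to the algebraic isomorphism before completing. The step you finally rely on also does give a complete proof. That step is K.~Saito's theorem that the weights $w_i\le\tfrac12$ of a quasihomogeneous isolated hypersurface singularity are analytic invariants; here $w_i=1/a_i$. But this is a genuinely different and much heavier route. It hands the entire content of the statement to a deep analytic theorem (the paper notes the analytic case also follows from Yoshinaga--Suzuki), and it forces part (1) through $\C$.

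The paper's argument is elementary and purely algebraic:
\begin{enumerate}
\item The scheme-theoretic singular locus is intrinsic. By Euler's identity it equals $\Spec k[X_1,\ldots,X_n]/(X_1^{a_1-1},\ldots,X_n^{a_n-1})$.
\item Its associated graded ring for the nilradical has Hilbert series $\prod_i(1+t+\cdots+t^{a_i-2})$.
\item Multiplying by $(1-t)^n$ gives $\prod_{i=1}^n(1-t^{a_i-1})$. Its lowest nonconstant term $-p\,t^{m}$ gives the smallest entry $m$ and its multiplicity $p$; cancelling and inducting recovers the whole multiset (Lemma~\ref{Lem: Key Lemma}).
\end{enumerate}
This works over any field of characteristic zero, and verbatim for convergent or formal power series.

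Your ``self-contained'' alternative had exactly this invariant in hand and discarded it on a false premise. With $n$ fixed there are no coincidences among products $\prod_{i=1}^n\frac{1-t^{m_i}}{1-t}$. For instance, $(1+t)(1+t+t^2)=1+2t+2t^2+t^3$ differs from $1+t+\cdots+t^5$. A coincidence such as $(1+t)(1+t^2+t^4)=1+t+\cdots+t^5$ arises only for weighted gradings, not for the $\mathfrak m$-adic filtration, where all generators have degree $1$. Equivalently, the exponent of the cyclotomic polynomial $\Phi_d$ ($d\ge2$) in the product is $\#\{i: d\mid m_i\}$, and M\"obius inversion recovers the multiset.

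The rest of that alternative does not add anything:
\begin{itemize}
\item The substitutes you propose, $\dim\mathfrak m^j/\mathfrak m^{j+1}$ and $\dim A/\mathfrak m^j$, are the same Hilbert function.
\item The tangent-cone peeling yields only the smallest exponent and its multiplicity, since the higher-order part of $f_\ba$ is not intrinsic.
\item Hauser--M\"uller factorization contributes nothing, because $\widehat V_\ba$ is indecomposable (Lemma~\ref{Lem: Isolated implies indecomposable}).
\end{itemize}

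Two smaller slips. Since every $a_i\ge2$, $f_\ba\in\mathfrak m^2$, so the embedding dimension of $\widehat B_\ba$ is always $n$; exponents equal to $2$ lower the corank of the Hessian, not the embedding dimension. Also, $f_\ba\in(\partial_1f_\ba,\ldots,\partial_nf_\ba)$ is just Euler's identity, not Saito's theorem.
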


\subsubsection{Strategy of the proofs and organization of the paper}

The paper is organized as follows. Section~\ref{se2} collects the necessary preliminaries concerning Brieskorn--Pham varieties, the good $\G_{m,k}$-actions they admit, complex analytic germs and algebroid spaces, and the analytic cancellation theorem of Hauser--M\"uller. The proofs of the main results are presented in Section~\ref{se3} according to their logical dependencies rather than the order in which the results are stated in the Introduction.

We begin in Subsection~\ref{subse3.1} by proving exponent rigidity for Brieskorn--Pham varieties over an arbitrary field of characteristic zero (Theorem~\ref{thm:mainC}). This is achieved by studying certain zero-dimensional complete intersection schemes arising from the Jacobian ideals of Brieskorn--Pham hypersurfaces and showing, via a Hilbert series argument, that the associated coordinate rings determine the exponent tuple.

Next, in Subsection~\ref{subse3.2}, we establish an analytic reduction theorem for cylinder isomorphisms. More precisely, we show that if two complex algebraic varieties with a unique singular point become isomorphic after taking a product with an arbitrary complex algebraic variety having a smooth point, then the corresponding complex analytic germs at their unique singular points are analytically isomorphic. The proof relies on the analytic cancellation and unique factorization theorems of Hauser--M\"uller for algebroid spaces, together with Artin's approximation theorem. We then combine this result with a theorem of R. V. Gurjar to establish a generalized Zariski cancellation theorem for complex affine irreducible varieties admitting good $\C^*$-actions and having a unique singularity (Theorem~\ref{thm:mainA}).

Finally, in Subsection~\ref{subse3.3}, we establish the generalized Zariski cancellation theorem for Brieskorn--Pham varieties (Theorem~\ref{thm:mainB}) by combining the exponent rigidity theorem with the analytic reduction theorem established earlier. This theorem constitutes the main culmination of the paper.

\section{\bf Preliminaries}\label{se2}

\subsection{Good $\G_{m,k}$-actions on Brieskorn--Pham varieties}\label{subse2.1}

For a field $k$, let $\G_{m,k}:=\Spec k[T,T^{-1}]$ denote the multiplicative group scheme over $k$. Recall that an effective $\G_{m,k}$-action on an affine variety $V=\Spec A$ is called \emph{good} if there exists a unique fixed point $v\in V$, called the \emph{vertex}, such that $v$ belongs to the closure of every $\G_{m,k}$-orbit. Since a $\G_{m,k}$-action on $V$ is equivalent to a $\Z$-grading of the coordinate ring $A$, the action is good if and only if the induced grading $A=\bigoplus_{d\in\Z}A_d$ is positive and satisfies $A_0=k$. Equivalently, the irrelevant ideal $A_+:=\bigoplus_{d>0}A_d$ is the unique homogeneous maximal ideal of $A$, corresponding to the fixed point $v$.

Let $k$ be an arbitrary field, let $n\geq 2$, and let $\ba=(a_1,\ldots,a_n)\in(\N_{\geq 2})^n$. Put
\[
N:=\operatorname{lcm}(a_1,\ldots,a_n)
\qquad\text{and}\qquad
w_i:=\frac{N}{a_i}
\quad (1\leq i\leq n).
\]
Then $B_\ba$ admits a positive $\Z$-grading determined by $\deg(X_i)=w_i$. Since
\[
\deg(X_i^{a_i})=a_iw_i=N
\]
for every $i$, the polynomial $f_\ba=X_1^{a_1}+\cdots+X_n^{a_n}$ is homogeneous of degree $N$. Consequently, this grading induces a $\G_{m,k}$-action on $V_\ba$ given by
\[
t\cdot(x_1,\ldots,x_n)
= (t^{w_1}x_1,\ldots,t^{w_n}x_n),
\qquad
t\in\G_{m,k}.
\]

Since all weights $w_i$ are strictly positive, the induced grading on $B_\ba$ is positive. Moreover, the only homogeneous elements of degree $0$ are the constants, so $(B_\ba)_0=k$. Its unique fixed point is the vertex $\oo_\ba\in V_\ba$, corresponding to the homogeneous maximal ideal $(X_1,\ldots,X_n)=(B_\ba)_+$. Furthermore, for every point $x=(x_1,\ldots,x_n)\in V_\ba$, the orbit map
\[
\G_{m,k}\longrightarrow V_\ba,
\qquad
t\longmapsto
(t^{w_1}x_1,\ldots,t^{w_n}x_n),
\]
extends to a morphism $\A^1_k\to V_\ba$ by sending $0$ to $\oo_\ba$, since $w_i>0$ for all $i$. Consequently, $\oo_\ba$ belongs to the closure of every $\G_{m,k}$-orbit. Under our standing assumptions, $\oo_\ba$ is the unique singular point of $V_\ba$. Since $\oo_\ba$ is the unique fixed point of the action, the above $\G_{m,k}$-action on $V_\ba$ is good.\\

Since the proofs rely heavily on complex analytic methods, we briefly recall some notions from complex analytic geometry that will be used throughout this section.

\subsection{A quick glimpse of complex analytic geometry}\label{subse2.2}
An \emph{analytic space} is a locally ringed space $(X,\mathcal{H}_X)$ locally isomorphic to a closed complex analytic subspace of an open subset of $\C^n$ under the usual complex Euclidean topology. If $(X,\mathcal{H}_X)$ is an analytic space and $x \in X$, then the pair $(X,x)$ is called an \emph{analytic germ} modulo the equivalence that two such pairs $(X,x)$ and $(Y,y)$ are said to be \emph{analytically isomorphic} (or \emph{analytically equivalent}) if there exist open neighbourhoods $M(x) \subseteq X$ of $x$ and $N(y) \subseteq Y$ of $y$ under the usual complex topology, together with an isomorphism of analytic spaces
\[
\varphi : M(x) \xrightarrow{\sim} N(y)
\]
such that $\varphi(x)=y$.

Now let $X$ be a complex algebraic variety. Then $X$ has the structure sheaf $\mathcal{O}_X$ under the Zariski topology. On the other hand, associated to $X$ is a natural analytic space $(X^{\rm an},\mathcal{O}^{\rm an}_X)$ obtained by endowing $X$ with the sheaf $\mathcal{O}^{\rm an}_X$ of holomorphic functions. For a point $x \in X$, we denote by $\mathcal{O}_{X,x}$ and $\mathcal{O}^{\rm an}_{X,x}$ the corresponding algebraic and analytic local ring at $x$, respectively. By this, $X$ is also regarded as an analytic space.

If $(X,x)$ and $(Y,y)$ are germs of complex algebraic varieties, then they are said to be \emph{analytically isomorphic} if the associated analytic germs $(X^{\rm an},x)$ and $(Y^{\rm an},y)$ are analytically isomorphic.

We shall use the following fundamental characterization: two analytic germs $(X,x)$ and $(Y,y)$ are analytically isomorphic if and only if their completed local rings $\widehat{\mathcal{H}}_{X,x}$ and $\widehat{\mathcal{H}}_{Y,y}$ are isomorphic as $\C$-algebras.

An \emph{algebroid space} is defined by the ideals of the formal power series ring over $\C$. Throughout, we shall denote by $\widehat{(X,x)}$ the formal completion of a complex analytic space germ $(X,x)$, which is an algebroid space.

\begin{thm}[{Hironaka--Rossi, Artin; cf. \cite[Theorem 4.2.3]{Ish2014}}]\label{Thm: Artin's approximation}
Let $(X,x)$ and $(Y,y)$ be germs of analytic spaces. Then the following are equivalent:
\begin{enumerate}[\indent\rm(1)]
\item $(X,x)$ is analytically equivalent to $(Y,y)$;

\item There exists an isomorphism of $\C$-algebras $\widehat{\mathcal{H}}_{X,x}\simeq \widehat{\mathcal{H}}_{Y,y}$;

\item There exists an isomorphism of $\C$-algebras $\mathcal{H}_{X,x}\simeq \mathcal{H}_{Y,y}$.
\end{enumerate}
\end{thm}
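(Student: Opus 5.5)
The plan is to prove the cycle of implications $(1)\Rightarrow(3)\Rightarrow(2)\Rightarrow(1)$. Here $\mathcal{H}_{X,x}$ is the convergent analytic local ring, and $\widehat{\mathcal{H}}_{X,x}$ is its $\mathfrak m$-adic completion.

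\emph{Step 1: $(1)\Rightarrow(3)$.} This is the trivial direction. An analytic isomorphism of germs $\varphi\colon (M(x),x)\to (N(y),y)$ induces by pullback a local $\C$-algebra isomorphism $\varphi^*\colon \mathcal{H}_{Y,y}\to\mathcal{H}_{X,x}$. Its inverse comes from $\varphi^{-1}$.

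\emph{Step 2: $(3)\Rightarrow(2)$.} Every $\C$-algebra isomorphism between local $\C$-algebras with residue field $\C$ is automatically local. Indeed, the maximal ideal is the set of non-units. Hence it sends $\mathfrak m_{Y,y}^k$ onto $\mathfrak m_{X,x}^k$ for every $k$, so it is $\mathfrak m$-adically bicontinuous. Passing to completions then gives $\widehat{\mathcal{H}}_{X,x}\simeq\widehat{\mathcal{H}}_{Y,y}$.

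\emph{Step 3: $(2)\Rightarrow(1)$, the main step.} I would proceed in three sub-steps.
\begin{itemize}
\item[(a)] Reduce to (3). Choose embeddings $\mathcal{H}_{X,x}=\C\{x_1,\dots,x_n\}/I$ and $\mathcal{H}_{Y,y}=\C\{y_1,\dots,y_m\}/J$. Convergent power series rings are faithfully flat over themselves in the completion, so $\widehat{\mathcal{H}}_{X,x}=\C[\![x]\!]/I\C[\![x]\!]$, and likewise for $Y$.
\item[(b)] Encode the formal isomorphism as a formal solution of analytic equations. A formal isomorphism $\hat\psi$ is determined by the images $\hat\psi(y_j)=\hat u_j(x)\in\C[\![x]\!]$ and $\hat\psi^{-1}(x_i)=\hat v_i(y)\in\C[\![y]\!]$. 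Take generators $g_1,\dots,g_s$ of $J$ and $f_1,\dots,f_r$ of $I$. The conditions to satisfy are
\[
g_l(\hat u(x))\in I\C[\![x]\!],\qquad f_k(\hat v(y))\in J\C[\![y]\!],
\]
\[
\hat v_i(\hat u(x))-x_i\in I\C[\![x]\!],\qquad \hat u_j(\hat v(y))-y_j\in J\C[\![y]\!].
\]
Writing each ideal membership with unknown coefficient series turns this into a finite system of analytic equations in unknown power series. Compositions such as $\hat v(\hat u(x))$ mix the variable sets, so I would separate them by adding auxiliary unknowns with $\hat u(0)=0$. This gives a system $F(x,z)=0$ with $F$ convergent and a formal solution $\hat z(x)$.
\item[(c)] Apply Artin's approximation theorem. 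It yields a convergent solution $z(x)$ agreeing with $\hat z$ to order $2$. The resulting convergent maps $u,v$ then induce mutually inverse local homomorphisms modulo $I$ and $J$, which gives (3). Agreement to order $2$ preserves the linear parts, and this is what guarantees the maps are local, i.e.\ that $u(0)=0$ and $v(0)=0$.
\end{itemize}

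\emph{Step 4: $(3)\Rightarrow(1)$.} By the standard equivalence between analytic germs and analytic algebras, a local isomorphism $\mathcal{H}_{Y,y}\to\mathcal{H}_{X,x}$ determines the components $u_j$. These define a holomorphic map germ $\C^n\to\C^m$ carrying $X$ into $Y$, and $v$ gives its inverse on small representatives. This yields (1).

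The main obstacle is the bookkeeping in Step 3(b): formulating the two-sided inverse conditions as a single system to which Artin's theorem applies in one variable set.
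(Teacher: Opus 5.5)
Steps 1, 2 and 4 are fine. The paper itself gives no proof of this statement; it only quotes it from Ishii. The implication $(2)\Rightarrow(3)$ in Step 3, however, has a genuine gap, exactly at the ``bookkeeping'' you postponed.

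Artin's theorem concerns a system $F(x,z)=0$ with $F$ convergent in $(x,z)$ and finitely many unknowns $z=z(x)$, all of them power series in the \emph{same} variables $x$. The conditions $\hat v_i(\hat u(x))-x_i\in I\C[\![x]\!]$ and $\hat u_j(\hat v(y))-y_j\in J\C[\![y]\!]$ are not of this form. They evaluate one unknown series at another unknown series, and $\hat v$ is a series in $y$ while $\hat u$ is a series in $x$. Auxiliary unknowns such as $w(x)=\hat v(\hat u(x))$ do not help, because $w$ is tied to $\hat u$ and $\hat v$ by no analytic equation. If instead you keep $u=u(x)$ and $v=v(y)$ as unknowns in different variable sets, you face an approximation problem where different unknowns are constrained to depend on different subsets of the variables. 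Artin's theorem does not cover such problems, and for convergent series even the nested case fails in general (Gabrielov). So the conclusion of 3(c), that the approximations $u,v$ are mutually inverse modulo $I$ and $J$, is not justified as written.

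The missing idea is that you never need to approximate the inverse relations. Approximate only $\hat u$. The equations $g_l(u(x))=\sum_k a_{lk}(x)f_k(x)$ in the unknowns $(u,a)$ form a genuine Artin system, so there is a convergent solution with $u\equiv\hat u \bmod \mathfrak m^2$.
\begin{itemize}
\item This gives a local homomorphism $\phi\colon\mathcal H_{Y,y}\to\mathcal H_{X,x}$ whose map on $\mathfrak m/\mathfrak m^2$ coincides with that of $\hat\psi$, hence is bijective.
\item By the cotangent criterion for analytic algebras (Nakayama plus Weierstrass finiteness), $\phi$ is surjective, and hence so is $\hat\phi$.
\item Then $\hat\psi^{-1}\circ\hat\phi$ is a surjective endomorphism of the Noetherian ring $\widehat{\mathcal H}_{Y,y}$, hence injective.
\item Therefore $\hat\phi$ is injective, and so is $\phi$, since $\mathcal H_{Y,y}\subseteq\widehat{\mathcal H}_{Y,y}$. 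Thus $\phi$ is an isomorphism.
\end{itemize}

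If you prefer to keep your two-sided set-up, there are two ways to do it. First, you can approximate $\hat u$ and $\hat v$ in two \emph{separate} Artin systems and apply the same argument to $\chi\circ\phi$ and $\phi\circ\chi$, which induce the identity on cotangent spaces. Second, you can encode the inverse conditions through the graph ideals $(I,\,y-U)=(J,\,x-V)$ in $\C\{x,y\}$, with $U,V$ unknowns in all the variables, and then recover $u(x)$ and $v(y)$ by the implicit function theorem. With this repair the rest of your plan goes through.
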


\begin{thm}[{Artin's Algebraization Theorem; cf. \cite[Theorem 4.2.4]{Ish2014}}]\label{Thm: Artin's algebraization}
For a germ $(X,x)$ of an analytic space, if $x$ is an isolated singularity, then there exists an algebraic variety $A$ over $\C$ and a point $P \in A$ such that $(X,x)=(A,P)$.
\end{thm}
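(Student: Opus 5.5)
This is Artin's algebraization theorem, which the paper cites from the literature rather than proving. The natural route is Artin approximation combined with finite determinacy of isolated singularities.

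\emph{Reduction to an ideal.} Embed the germ $(X,x)$ in $(\C^n,0)$, so that
\[
\mathcal{H}_{X,x}\cong \C\{z_1,\ldots,z_n\}/I
\]
for an ideal $I$ generated by convergent power series $f_1,\ldots,f_m$. The aim is an ideal $J$ generated by polynomials such that
\[
\C\{z\}/I\cong \C\{z\}/J .
\]
Granting this, take $A=\Spec \C[z]/J'$, where $J'$ is the polynomial ideal generated by the polynomial generators of $J$, and take $P$ to be the origin. The analytic germ $(A^{\rm an},P)$ has local ring $\C\{z\}/J$, so $(X,x)\cong (A,P)$ by Theorem~\ref{Thm: Artin's approximation}.

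\emph{Finite determinacy.} Since $x$ is an isolated singularity, the singular locus of $(X,x)$ is just the point. Hence a suitable Jacobian-type ideal, namely $I$ together with the appropriate minors of $(\partial f_i/\partial z_j)$ (with Fitting-type refinements in the non-complete-intersection case), contains a power $\mathfrak m^{c}$ of the maximal ideal. The determinacy step then shows that for $k\gg c$, any $g_i\equiv f_i \bmod \mathfrak m^{k}$ generate an ideal whose quotient is isomorphic to $\C\{z\}/I$, via a coordinate change and unit matrix found by a Tougeron implicit-function / Newton iteration argument. Taking each $g_i$ to be the Taylor polynomial of $f_i$ of degree $k$ then gives the required polynomial ideal $J$.

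In the hypersurface and complete intersection cases this is the classical Mather--Tougeron finite determinacy theorem, since the Tjurina algebra is finite-dimensional. The main obstacle is the general, non-complete-intersection case. Here a naive truncation can change the flatness, that is, the relations among the $f_i$.

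To handle this, I would not truncate the $f_i$ alone. Instead I would consider the system consisting of the generators $f$ together with their relation module, $\sum_i r_{ij}f_i=0$. Artin's approximation theorem produces polynomial (algebraic) solutions $(g,s)$ agreeing with $(f,r)$ to high order. Artin's theorem gives Nash/algebraic rather than polynomial solutions, but an étale neighbourhood of the origin is still an algebraic variety, so this suffices. A determinacy argument of Elkik--Artin type, using that the obstruction module is supported at the isolated point and hence annihilated by $\mathfrak m^{c}$, then shows that $\C\{z\}/(g)\cong\C\{z\}/I$.
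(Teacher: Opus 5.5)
The paper gives no proof of this statement---it simply imports it from Ishii's book, and hence from Artin's work. Your outline is essentially Artin's original argument and is sound as a sketch: approximate the generators together with a generating set of their syzygies by algebraic power series, apply finite determinacy of isolated singularities under such relation-preserving perturbations, and realize the Nash generators on an \'etale neighbourhood of the origin.

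There are three caveats. First, your opening determinacy claim for arbitrary high-order perturbations holds only in the complete intersection case, as you note yourself. Second, the module whose finite length drives the relation-preserving determinacy step is $T^1$ of the germ (first-order deformations modulo those induced by coordinate changes), not an obstruction module. Third, that step carries all the real content and you only assert it, though it is exactly Artin's determinacy lemma.
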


After Artin's algebraization theorem, as remarked by Ishii, when studying isolated singularities on analytic spaces, one may always assume that they arise from isolated singularities on algebraic varieties. Henceforth, unless otherwise stated, (a germ of) an isolated singularity $(X,x)$ will mean that $X$ is an algebraic variety and $\mathcal{O}_X$ denotes its structure sheaf in the Zariski topology.

\subsection{Generalized cancellation for analytic space germs}\label{subse2.3}
The following two results from \cite{HM1990} are fundamental tools for our subsequent arguments. They provide a unique factorization theory for algebroid spaces and analytic space germs, which will be crucial in identifying and comparing product decompositions.

\begin{defn}
    An algebroid space $V$ is \emph{decomposable} if there are non-trivial algebroid spaces $V_1$ and $V_2$ (i.e., different from the reduced point) such that $V\cong V_1 \times V_2$. And $V$ is called \emph{indecomposable} if it is not decomposable.
\end{defn}

\begin{thm}[{\cite[Theorem 2]{HM1990}}; Unique factorization property of algebroid spaces]\label{Thm: H-M_Unique factorization for algebroid spaces}\mbox{}

For any non-trivial algebroid space $Z$ there exist a unique integer $p$ and non-trivial indecomposable algebroid spaces $Z_1,\ldots,Z_p$, unique up to permutation and isomorphism, such that
\[
Z \cong Z_1 \times \cdots \times Z_p.
\]
\end{thm}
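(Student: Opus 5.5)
The approach has two parts. Existence comes from an induction on embedding dimension. Uniqueness comes from a Krull--Schmidt type argument in which a geometric Fitting lemma replaces the module-theoretic one.

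\emph{Existence.} Write $Z=\operatorname{Spec}\mathcal{O}$ with $\mathcal{O}=\C[\![x_1,\ldots,x_m]\!]/I$, and let $e(Z):=\dim_\C \mathfrak m/\mathfrak m^2$ be the embedding dimension. For a product $V_1\times V_2$ the ring is the completed tensor product $\mathcal{O}_1\widehat{\otimes}_\C\mathcal{O}_2$. Its cotangent space is $\mathfrak m_1/\mathfrak m_1^2\oplus \mathfrak m_2/\mathfrak m_2^2$, so $e(V_1\times V_2)=e(V_1)+e(V_2)$. A non-trivial algebroid space has $e\geq 1$, so every factor of a non-trivial splitting has strictly smaller embedding dimension. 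Induction on $e(Z)$ therefore shows that $Z$ is a finite product of indecomposables, with $p\leq e(Z)$.

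\emph{Uniqueness.} First split off the smooth part, writing $Z\cong Z_0\times(\C^r,0)$ with $Z_0$ having no smooth factor. Here $r$ is read off intrinsically as the maximal number of derivations of $\mathcal{O}$ whose values at the closed point are linearly independent. Every decomposition must contain exactly $r$ copies of the line $(\C,0)$ and a decomposition of $Z_0$, so we may assume $Z$ has no smooth factor.

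Now let $Z=\prod_{i=1}^p Z_i=\prod_{j=1}^q Z'_j$ be two decompositions. For each $i$ and $j$, form the endomorphisms $\varepsilon_{ij}$ of $Z_i$ obtained by composing
\[
Z_i\hookrightarrow Z\to Z'_j\hookrightarrow Z\to Z_i ,
\]
where each inclusion is at the base point. Since the identity of $Z$ decomposes through the second factorization, I expect that for fixed $i$ the tangent maps of the $\varepsilon_{ij}$ cannot all be nilpotent on the cotangent space of $Z_i$. The key tool is then a \emph{Fitting lemma for algebroid spaces}: an endomorphism $\varphi$ of an algebroid space $Y$ splits $Y\cong Y'\times Y''$, with $\varphi$ an automorphism on $Y'$ and ``topologically nilpotent'' on $Y''$. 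One would obtain this by applying Fitting's decomposition to the action of $\varphi$ on the finite-dimensional truncations $\mathcal{O}/\mathfrak m^k$, and then passing to the limit, using the Artin approximation/implicit function theorem in formal power series to realize the invariant subspaces by actual coordinates. Applied to an indecomposable $Z_i$, this forces some $\varepsilon_{ij}$ to be an automorphism. Hence $Z_i\to Z'_j$ is a split monomorphism, and since $Z'_j$ is indecomposable it is an isomorphism $Z_i\cong Z'_j$.

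Finally, cancel this common factor. Either use the induced isomorphism of complements, as in the standard Krull--Schmidt exchange argument, or use the cancellation $Z_i\times W\cong Z_i\times W'\Rightarrow W\cong W'$, which I would prove by the same Fitting argument. Then induct on $p$.

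The main obstacle is the geometric Fitting lemma. Fitting's lemma itself is linear, whereas the splitting $Y\cong Y'\times Y''$ must be compatible with the ideal $I$ and realized by an honest product structure, not merely a splitting of the cotangent space. I would first try to construct $Y''$ as the limit of the stabilizing images $\varphi^k$ and $Y'$ as the fibre, and then verify the product structure order by order in $\mathfrak m$-adic filtration.
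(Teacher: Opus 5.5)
The paper does not prove this statement; it quotes it from \cite[Theorem~2]{HM1990}, so your outline has to stand on its own. The existence part is fine: embedding dimension is additive and is $\geq 1$ for non-trivial spaces. Your ``expectation'' about tangent maps is also easy to confirm. We have $\mathrm{id}_{TZ_i}=\sum_j d\varepsilon_{ij}$, whose trace is $\dim TZ_i\neq 0$, whereas nilpotent maps have trace $0$.

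The genuine gap is the key lemma. Your ``Fitting lemma for algebroid spaces'' is false, even in the form you actually use: an endomorphism of an \emph{indecomposable} space with non-nilpotent tangent map is an automorphism. Take $Y=\Spec\C[\![x,y]\!]/(xy)$. It is indecomposable, being reduced, one-dimensional, with an isolated singularity (cf.\ Lemma~\ref{Lem: Isolated implies indecomposable}). Consider the endomorphism $\varphi(x,y)=(x,0)$. It is idempotent with tangent map $\mathrm{diag}(1,0)$, so it is neither an automorphism nor topologically nilpotent. The same example refutes your step ``a split monomorphism into the indecomposable $Z'_j$ is an isomorphism.'' The $x$-axis embeds into $Y$ with retraction $(x,y)\mapsto x$, yet it is not isomorphic to $Y$.

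The reason is exactly the obstacle you flagged, and working order by order in $\mathfrak m$ cannot overcome it. Fitting's decomposition of $\varphi^*$ on $\mathcal O$ (or on $\mathcal O/\mathfrak m^k$) gives $\mathcal O=A_\infty\oplus J$. Here $A_\infty$ is the stable image, a subalgebra, and $J$ is the stable kernel, an ideal. Geometrically this is only a closed subspace $V(J)$ together with a retraction $Y\to V(J)$. Unlike direct summands of modules, retracts of algebroid spaces need not be product factors (in the example, $A_\infty=\C[\![x]\!]$ and $J=(y)$).

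So the uniqueness argument needs an ingredient that uses the fact that the $\varepsilon_{ij}$ arise from two product structures on the same space, not merely that they are endomorphisms of $Z_i$. Supplying this is the substantive content of Hauser--M\"uller's theorem, and your outline does not provide it. For the same reason, proving the cancellation $Z_i\times W\cong Z_i\times W'\Rightarrow W\cong W'$ ``by the same Fitting argument'' fails too. That cancellation is itself Hauser--M\"uller's \cite[Theorem~1]{HM1990}, which the paper uses as a separate deep input.

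A smaller point concerns your reduction to the case without smooth factors. It silently uses cancellation of $(\C^r,\oo)$ (that the two $Z_0$'s are isomorphic), which you assert without proof. This can be repaired with your own tools. The values of derivations at the closed point span an intrinsic subspace $T^{\rm tr}\subseteq T_\oo Z$. Each $Z_0$-slice has tangent space complementary to $T^{\rm tr}$. Projecting one slice onto the other therefore gives closed embeddings $Z_0\hookrightarrow Z_0'\hookrightarrow Z_0$. Finally, a surjective endomorphism of a Noetherian ring is bijective, so both embeddings are isomorphisms.
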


\medskip

The analytic version of this statement remains open in general. However, it does hold if the germ is algebraic (i.e., Nash analytic), i.e., if it can be defined by a power series which in suitable coordinates is algebraic over the ring of polynomials:

\begin{thm}[{cf. \cite[Theorem 3]{HM1990}}; Unique factorization property of algebraic analytic space germs]\label{Thm: H-M_Unique factorization for analytic space germs}\mbox{}

For any non-trivial algebraic analytic space germ $Z$ there exist a unique integer $p$ and non-trivial indecomposable analytic space germs $Z_1,\dots,Z_p$, unique up to permutation and isomorphism, such that
\[
Z \cong Z_1 \times \cdots \times Z_p.
\]
The factors $Z_i$ are algebraic. Moreover, passing to completions, this decomposition coincides with the factorization of $\widehat{Z}$ into indecomposable algebroid spaces.
\end{thm}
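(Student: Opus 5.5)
The statement is quoted from \cite[Theorem 3]{HM1990}, and the paper uses it as a black box. Were I to prove it, I would deduce it from the algebroid statement, Theorem~\ref{Thm: H-M_Unique factorization for algebroid spaces}, using Artin approximation in the form of Theorem~\ref{Thm: Artin's approximation}. The plan has three steps.

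\textbf{Step 1: formal splittings of an algebraic germ can be realized algebraically.} Let $Z$ be an algebraic germ. Embed it as $Z=V(I)\subseteq(\C^m,0)$, where $I$ is generated by algebraic power series $f_1,\dots,f_r$. Suppose $\widehat Z\cong W_1\times W_2$ with both factors non-trivial. After a formal change of coordinates $\varphi$ of $\C[\![x,y]\!]$, where $x=(x_1,\dots,x_s)$ and $y=(y_{s+1},\dots,y_m)$, the ideal $\varphi(\widehat I)$ is generated by elements $g_1(x),\dots,g_u(x)$ and $h_1(y),\dots,h_v(y)$. I would encode this as a finite system of polynomial equations over the ring of algebraic power series $\C\langle x,y\rangle$. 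The unknowns are:
\begin{itemize}
\item the components of $\varphi$ together with its inverse;
\item the series $g_j$, constrained to lie in $\C[\![x]\!]$, and the series $h_k$, constrained to lie in $\C[\![y]\!]$;
\item the transition matrices expressing the $\varphi(f_i)$ in terms of the $g_j,h_k$, and vice versa.
\end{itemize}
The formal decomposition supplies a formal solution of this system. Because some unknowns are required to depend only on subsets of the variables, this is a \emph{nested} approximation problem. The nested Artin approximation theorem holds for algebraic power series (Kurke--Mostowski--Pfister--Popescu--Roczen). Applying it gives an algebraic solution congruent to the formal one to high order. The approximated $\varphi$ is still invertible, since it agrees with the original to first order. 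Hence $Z\cong Z_1\times Z_2$ analytically, where $Z_1=V(g)$ and $Z_2=V(h)$ are algebraic germs and $\widehat{Z_i}\cong W_i$.

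\textbf{Step 2: existence.} First, an algebraic germ $Z$ is analytically indecomposable if and only if $\widehat Z$ is indecomposable. One direction holds because an analytic splitting completes to a formal one. The other direction is Step 1. Now factor $\widehat Z$ into indecomposables by Theorem~\ref{Thm: H-M_Unique factorization for algebroid spaces}. Peel off the factors one at a time using Step 1; this is an induction on $\dim$ or on embedding dimension. The result is $Z\cong Z_1\times\cdots\times Z_p$ with each $Z_i$ algebraic and $\widehat{Z_i}$ indecomposable, so each $Z_i$ is analytically indecomposable. By construction this factorization completes to the algebroid factorization of $\widehat Z$, which gives the ``moreover'' clause.

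\textbf{Step 3: uniqueness.} Suppose $Z\cong\prod Z_i\cong\prod Z'_j$ with all factors indecomposable analytic germs. Each factor is algebraic, since it is a factor of an algebraic germ, and this can be checked by Step 1 or by applying Artin algebraization to the factors. So by the equivalence in Step 2, the completions $\widehat{Z_i}$ and $\widehat{Z'_j}$ are indecomposable. Theorem~\ref{Thm: H-M_Unique factorization for algebroid spaces} then gives $p=p'$ and $\widehat{Z_i}\cong\widehat{Z'_{\sigma(i)}}$ for some permutation $\sigma$. Finally Theorem~\ref{Thm: Artin's approximation} upgrades each formal isomorphism to an analytic isomorphism $Z_i\cong Z'_{\sigma(i)}$.

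I expect the main obstacle to be Step 1. The difficulty is setting up the nested system correctly, so that the constraints ``depends only on $x$'' and ``depends only on $y$'' survive approximation, and then justifying nested approximation in the algebraic (Nash) setting. This is exactly why the purely analytic version of the statement remains open.
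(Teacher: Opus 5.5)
The paper gives no proof of this statement. It quotes it from Hauser--M\"uller and only ever uses the algebroid version, so your argument has to stand on its own. If Step~1 holds, Steps~2 and~3 go through. But Step~1 carries all the content, and the argument you give for it does not work.

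The nested approximation theorem for algebraic power series requires the constrained unknowns to depend on the members of a \emph{chain} of variable sets $\{z_1,\dots,z_{s_1}\}\subseteq\{z_1,\dots,z_{s_2}\}\subseteq\cdots$. Your constraints are $g_j\in\C[\![x]\!]$ and $h_k\in\C[\![y]\!]$ with $x$ and $y$ disjoint blocks. That pattern is not a chain for any ordering of the variables, so the theorem does not apply.

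This is not a technicality: approximation with two disjoint blocks fails even for linear systems. Take unknowns $g(x_1)$, $h(x_2)$, $u(x_1,x_2)$, $v(x_1,x_2)$ subject to
\[
g(x_1)-h(x_2)=(x_1-x_2)\,u,\qquad g(x_1)-h(x_2)-(x_1-x_2)\,h(x_2)=(x_1-x_2)^2\,v.
\]
Setting $x_1=x_2$ in the first equation gives $g=h$. Comparing the second equation with the Taylor expansion of $h$ then forces $h'=h$, so every solution has $g=h=c\,e^{t}$. The formal (indeed convergent) solution with $c=1$ has no algebraic solution agreeing with it even modulo $\mathfrak m$. So ``encode the splitting as a constrained system and approximate'' cannot succeed by itself; the product structure has to be used.

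What the product structure does give you is a reduction. Suppose one factor is already algebraizable, say $W_1\cong\widehat{Z_1}$ with $Z_1=V(g)$ for a tuple $g$ of algebraic series in $x$. Then $g$ becomes a fixed coefficient. The remaining unknowns ($\Phi(x,y)$, $h(y)$, and the transition matrices) form a genuinely nested problem for the ordering $(y,x)$, and your approximation argument goes through. So the missing step is exactly this: \emph{a formal factor of the completion of an algebraic germ is itself isomorphic to the completion of an algebraic germ}. That is the real substance of Hauser--M\"uller's Theorem~3, and nothing in your proposal proves it.

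There are two smaller points.
\begin{enumerate}
\item The fallback in Step~3, applying Artin algebraization to the factors, is not available. That theorem needs an isolated singularity, and factors of an algebraic germ need not have one.
\item The approximating series agree with $g_j$ and $h_k$ only to high order. Without finite determinacy, that does not give $\widehat{Z_i}\cong W_i$. This overclaim is harmless, though: Steps~2 and~3 can be arranged to need only the weaker fact that a decomposable completion forces a nontrivial algebraic splitting, combined with the algebroid factorization theorem and Artin approximation.
\end{enumerate}
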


\section{\bf Proofs of the Main Results}\label{se3}
\subsection{Exponent rigidity for Brieskorn--Pham varieties}\label{subse3.1}
The following elementary lemma is the key ingredient for the exponent rigidity phenomenon for the Brieskorn--Pham varieties.

\begin{lem}\label{Lem: Key Lemma}
Let $\ba=(a_1,\ldots,a_r)\in\N^r$ and $\bb=(b_1,\ldots,b_s)\in\N^s$ satisfy
\[
1\le a_1\le \cdots\le a_r
\quad\text{and}\quad
1\le b_1\le \cdots\le b_s,
\]
where $1\leq r,s\le n$. Define
\[
R:=\frac{k[X_1,\ldots,X_n]}{(X_1^{a_1},\ldots,X_r^{a_r})}
\qquad\text{and}\qquad
S:=\frac{k[X_1,\ldots,X_n]}{(X_1^{b_1},\ldots,X_s^{b_s})}.
\]
If $R\cong_{k\text{-alg}} S$, then $r=s$ and $a_i=b_i$ for all $1\le i\le r$.
\end{lem}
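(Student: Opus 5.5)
The plan is to extract from $R$ two invariants that depend only on its $k$-algebra structure, namely a Krull dimension and a Hilbert series of an associated graded ring, and then show that these recover $r$ and the multiset $\{a_1,\ldots,a_r\}$. Since both tuples are sorted, this gives $a_i=b_i$ for all $i$. Note that exponents equal to $1$ are allowed, so we cannot simply discard variables. Nevertheless every $X_i$ with $i\le r$ is nilpotent in $R$, which is what we exploit.

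\emph{Step 1: $r=s$.} Put $A_{\ba}:=k[X_1,\ldots,X_r]/(X_1^{a_1},\ldots,X_r^{a_r})$, an Artinian local $k$-algebra of length $\prod a_i$. Then
\[
R\cong A_{\ba}\otimes_k k[X_{r+1},\ldots,X_n].
\]
Its nilradical is $\mathfrak n_R=(X_1,\ldots,X_r)$, and $R/\mathfrak n_R\cong k[X_{r+1},\ldots,X_n]$. Hence $\dim R=n-r$, and likewise $\dim S=n-s$. A $k$-algebra isomorphism preserves Krull dimension, so $r=s$.

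\emph{Step 2: passing to an intrinsic Artinian local ring.} The prime $\mathfrak n_R$ is the unique minimal prime of $R$, and a ring isomorphism $\varphi\colon R\to S$ satisfies $\varphi(\mathfrak n_R)=\mathfrak n_S$. Therefore $\varphi$ induces an isomorphism of the localizations $R_{\mathfrak n_R}\cong S_{\mathfrak n_S}$. Setting $K:=k(X_{r+1},\ldots,X_n)$, we have
\[
R_{\mathfrak n_R}\cong K[X_1,\ldots,X_r]/(X_1^{a_1},\ldots,X_r^{a_r}),
\]
an Artinian local ring with maximal ideal $\mathfrak m=(X_1,\ldots,X_r)$. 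The same holds for $S$ with the $b_i$. The function $j\mapsto \ell(\mathfrak m^j/\mathfrak m^{j+1})$ is an invariant of a local ring, computed as a length over the residue field. Since both residue fields are $K$, up to an isomorphism induced by $\varphi$, the two Hilbert–Samuel functions coincide.

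\emph{Step 3: computing and inverting the Hilbert series.} Because the $X_i^{a_i}$ are homogeneous in the standard grading, $\operatorname{gr}_{\mathfrak m}$ of this ring is $K[X_1,\ldots,X_r]/(X_i^{a_i})$ with $\deg X_i=1$. Its Hilbert series is
\[
\prod_{i=1}^r \frac{1-t^{a_i}}{1-t}.
\]
Since $r=s$, multiplying by $(1-t)^r$ gives the identity of polynomials $\prod_i(1-t^{a_i})=\prod_i(1-t^{b_i})$.

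It remains to show that the multiset of exponents can be read off from this product, and I expect this to be the only step needing care. I would argue by induction on $r$ using cyclotomic factorization, $1-t^{a}=-\prod_{d\mid a}\Phi_d(t)$. Let $m$ be the largest $d$ with $\Phi_d$ dividing the product. Then $m=\max a_i=\max b_i$, because $\Phi_d$ divides $1-t^a$ only when $d\mid a$, and $\Phi_{a}$ itself divides $1-t^{a}$. Cancel one factor $1-t^m$ from each side, which is legitimate in the domain $\Z[t]$, and apply induction. The factors with $a_i=1$ are handled automatically, since the count of such factors is already fixed by $r$ and the remaining exponents. This yields $\{a_i\}=\{b_i\}$ as multisets, hence $a_i=b_i$ after sorting.
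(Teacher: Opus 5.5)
Your proof is correct. Its skeleton matches the paper's: Krull dimension gives $r=s$, a Hilbert series of an associated graded ring gives $\prod_i(1-t^{a_i})=\prod_i(1-t^{b_i})$, and the exponents are then read off. Two steps are done differently.

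\textbf{Where the Hilbert function is computed.} The paper works directly with $\operatorname{gr}_{\mathfrak N_R}(R)$ as a graded $k$-algebra and compares Hilbert series over $k$. You first localize at the unique minimal prime, obtaining the Artinian local ring $K[X_1,\ldots,X_r]/(X_1^{a_1},\ldots,X_r^{a_r})$ with $K=k(X_{r+1},\ldots,X_n)$, and then use the lengths $\ell(\mathfrak m^j/\mathfrak m^{j+1})$. This gains something real. When $r<n$, the pieces $\mathfrak N_R^m/\mathfrak N_R^{m+1}$ are infinite-dimensional over $k$. The paper's displayed series, with its factor $(\sum_{\ell}t^\ell)^{n-r}$, implicitly puts $X_{r+1},\ldots,X_n$ in degree $1$. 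The induced isomorphism $\operatorname{gr}_{\mathfrak N_R}(R)\cong\operatorname{gr}_{\mathfrak N_S}(S)$ has no reason to respect that grading. The correct invariant there is the rank of each piece as a free module over $R/\mathfrak N_R$, which is exactly what your passage to the generic point computes. Your use of lengths is also intrinsic even though $\varphi$ is only $k$-linear, not $K$-linear. In the paper's only application one has $r=n$, so the issue does not arise there.

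\textbf{The combinatorial endgame.} The paper compares lowest-order terms, $1-p\,t^{a_1}+\cdots$, to peel off the smallest exponent together with its multiplicity. This is more elementary than your cyclotomic argument, which removes the largest exponent one at a time. Both are valid.
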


\begin{proof}
Since $R\cong_{k\text{-alg}} S$, the rings $R$ and $S$ have the same Krull dimensions. Since $\dim(R)=n-r$ and $\dim(S)=n-s$, it follows that $r=s$.

Let $\varphi:R\xrightarrow{\sim}S$ be a $k$-algebra isomorphism. Let $\mathfrak{N}_R$ and $\mathfrak{N}_S$ denote the nilradicals of $R$ and $S$, respectively. Since ring isomorphisms preserve nilpotency, $\varphi(\mathfrak N_R)=\mathfrak N_S$ and hence $\varphi(\mathfrak N_R^m)=\mathfrak N_S^m$
for every $m\ge0$, and therefore $\varphi$ induces isomorphisms $\mathfrak N_R^m/\mathfrak N_R^{m+1} \cong
\mathfrak N_S^m/\mathfrak N_S^{m+1}$. Thus, we obtain
\[
\operatorname{gr}_{\mathfrak{N}_R}(R)
:=
\bigoplus_{m\geq 0}
\frac{\mathfrak{N}_R^m}{\mathfrak{N}_R^{m+1}}
\cong_k
\bigoplus_{m\geq 0}
\frac{\mathfrak{N}_S^m}{\mathfrak{N}_S^{m+1}}
=: \operatorname{gr}_{\mathfrak{N}_S}(S).
\]
as graded $k$-algebras. Therefore, $H_{\operatorname{gr}_{\mathfrak{N}_R}(R)}(t)= H_{\operatorname{gr}_{\mathfrak{N}_S}(S)}(t)$,
where $H_{A}$ denotes the Hilbert series of a $k$-algebra $A$. 

It is well-known that, $$\chi^k(T/I, T/J)(t) :=\sum\limits_{i\geq 0}(-1)^i H_{\Tor^k_i(T/I,T/J)}(t)=\frac{H_{T/I}(t)\cdot H_{T/J}(t)}{H_{k}(t)},$$ as Laurent polynomial in $\Z(\!(t)\!)$, for $T$ being a polynomial ring over $k$ and $I, J \subseteq T$ two ideals of $T$, see \cite{AB1993}. Since all $k$-vector spaces are flat $k$-modules, $\Tor^k_i(T/I,T/J)=0$ for $i>0$. Thus, the above formula yields that 
\begin{equation}\label{eq:mult_of_Hilbert_series}
    H_{T/I\otimes_k T/J}(t)=H_{T/I}(t)\cdot H_{T/J}(t),
\end{equation} 
as $H_{k}(t)=1 \in \Z(\!(t)\!)$.

Observe that $\mathfrak{N}_R=(\bar{X}_1,\ldots,\bar{X}_r)$, where $\bar{X}_i$ denotes the image of $X_i$ in $R$. Therefore the quotient $\mathfrak N_R^m/\mathfrak N_R^{m+1}$ naturally gets an $R/\mathfrak{N}_R$-module structure, and $R/\mathfrak{N}_R \cong k[X_{r+1},\ldots,X_n]$. Moreover, it is evident that $\mathfrak N_R^m/\mathfrak N_R^{m+1}$ is a free $k[X_{r+1},\ldots,X_n]$-module with basis consisting of the images of the monomials
\[
\{\bar{X}_1^{i_1}\cdots \bar{X}_r^{i_r}
\mid
i_1+\cdots+i_r=m,\;
0\le i_j<a_j
\}.
\]
Consequently,
\[
\operatorname{gr}_{\mathfrak N_R}(R)
\cong
k[X_{r+1},\ldots,X_n]
\otimes_k
\frac{k[U_1,\ldots,U_r]}
{(U_1^{a_1},\ldots,U_r^{a_r})},
\]
where each $U_i$ is homogeneous of degree $1$. Similarly,
\[
\operatorname{gr}_{\mathfrak{N}_S}(S)
\cong
k[X_{r+1},\ldots,X_n]
\otimes_k
\frac{k[V_1,\ldots,V_r]}
{(V_1^{b_1},\ldots,V_r^{b_r})},
\]
where each $V_i$ is homogeneous of degree $1$.

Since Hilbert series are multiplicative under tensor products of graded $k$-algebras, we obtain
\[
H_{\operatorname{gr}_{\mathfrak{N}_R}(R)}(t)
=
\prod_{i=1}^{r}
(1+t+\cdots+t^{a_i-1})
\cdot
\left(\sum_{\ell\geq 0} t^\ell\right)^{n-r},
\]
and
\[
H_{\operatorname{gr}_{\mathfrak{N}_S}(S)}(t)
=
\prod_{i=1}^{r}
(1+t+\cdots+t^{b_i-1})
\cdot
\left(\sum_{\ell\geq 0} t^\ell\right)^{n-r}.
\]
Since these two Hilbert series are equal, it follows that
\[
\prod_{i=1}^{r}
(1+t+\cdots+t^{a_i-1})
=
\prod_{i=1}^{r}
(1+t+\cdots+t^{b_i-1}),
\]
and hence
\begin{equation}\label{eq:key}
\prod_{i=1}^{r}(1-t^{a_i})
=
\prod_{i=1}^{r}(1-t^{b_i}).
\end{equation}

Let $p$ and $q$ be the largest indices such that $a_1=\cdots=a_p$ and $b_1=\cdots=b_q$, respectively, for some $1\leq p,\, q\leq r$. Expanding both sides of \eqref{eq:key}, we obtain
\begin{equation}\label{eq:key1}
1-pt^{a_1}+P(t)=1-qt^{b_1}+Q(t),
\end{equation}
where \(P(t),Q(t)\in\Z[t]\) satisfy $\operatorname{ord}_t(P)>a_1$ and $\operatorname{ord}_t(Q)>b_1$, i.e., $P(t)$ and $Q(t)$ contain only terms of degree strictly greater than $a_1$ and $b_1$, respectively. Comparing the lowest nonconstant terms shows that $a_1=b_1$ and $p=q$. Therefore $a_i=b_i$ for $1\leq i\leq p$. Cancelling the common factors \((1-t^{a_1})^p=(1-t^{b_1})^p\) from \eqref{eq:key}, we obtain
\begin{equation}\label{eq:key2}
\prod_{i=p+1}^{r}(1-t^{a_i})
=
\prod_{i=p+1}^{r}(1-t^{b_i}).
\end{equation}
Repeating the same argument inductively yields $a_i=b_i$ for $1\leq i\leq r$. Hence $r=s$ and $a_i=b_i$ for all $1\leq i\leq r$. This completes the proof.
\end{proof}

\begin{rem}\label{Rem 1}
    The same conclusion in Lemma \ref{Lem: Key Lemma} holds if we consider $R$ and $S$ over $k=\C$, as the same quotients of either the convergent power series or formal power series, instead of polynomial algebras.
\end{rem}

We are now in a position to establish exponent rigidity for Brieskorn--Pham varieties.

\begin{thm}\label{Thm: Exponent rigidity of B-P variety}
    Let $V_\ba=\Spec B_\ba$ and $V_\bb=\Spec B_\bb$ be Brieskorn--Pham varieties over a field $k$ of characteristic $0$ corresponding to exponent tuples $\ba,\bb\in(\N_{\ge2})^n$ for $n\geq 2$. Then $V_\ba\cong_k V_\bb$ if and only if $\ba\sim \bb$, i.e., $\ba=\bb$ up to permutation of entries.
\end{thm}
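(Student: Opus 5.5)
The plan is to extract from $B_\ba$ an intrinsic zero-dimensional algebra, namely the coordinate ring of the singular scheme cut out by the Jacobian ideal, and then apply Lemma~\ref{Lem: Key Lemma} to it. The implication $\ba\sim\bb\Rightarrow V_\ba\cong_k V_\bb$ is immediate: a permutation of the variables $X_1,\ldots,X_n$ carries $f_\ba$ to $f_\bb$. For the converse, since $V_\ba$ and $V_\bb$ are affine, a $k$-isomorphism $V_\ba\cong_k V_\bb$ is the same thing as a $k$-algebra isomorphism $\varphi\colon B_\ba\xrightarrow{\sim}B_\bb$. So it suffices to recover $\ba$ up to permutation from the $k$-algebra $B_\ba$.

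\textbf{Step 1: an intrinsic ideal.} Consider the module of Kähler differentials $\Omega_{B_\ba/k}$. Its standard presentation is
\[
B_\ba^{\,1}\xrightarrow{\;(\partial f_\ba/\partial X_1,\ldots,\partial f_\ba/\partial X_n)\;}B_\ba^{\,n}\longrightarrow \Omega_{B_\ba/k}\longrightarrow 0 .
\]
Hence the Fitting ideal $\operatorname{Fitt}_{n-1}(\Omega_{B_\ba/k})$ equals the ideal generated by the entries of this row. That is, it is the Jacobian ideal
\[
J_\ba=(a_1X_1^{a_1-1},\ldots,a_nX_n^{a_n-1})B_\ba .
\]
Fitting ideals do not depend on the chosen presentation. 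Moreover, $\varphi$ induces an isomorphism $\Omega_{B_\ba/k}\cong\Omega_{B_\bb/k}$ that is semilinear with respect to $\varphi$. Therefore $\varphi(J_\ba)=J_\bb$, and $\varphi$ descends to a $k$-algebra isomorphism $B_\ba/J_\ba\cong B_\bb/J_\bb$.

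\textbf{Step 2: computing the quotient.} This is where characteristic $0$ enters. Each $a_i$ is invertible in $k$, so the Euler-type identity
\[
f_\ba=\sum_i \frac{1}{a_i}\,X_i\,\frac{\partial f_\ba}{\partial X_i}
\]
shows that $f_\ba$ already lies in the ideal $(X_1^{a_1-1},\ldots,X_n^{a_n-1})$ of $k[X_1,\ldots,X_n]$. Consequently
\[
B_\ba/J_\ba\;\cong\;\frac{k[X_1,\ldots,X_n]}{(X_1^{a_1-1},\ldots,X_n^{a_n-1})},
\]
and likewise for $\bb$. All exponents $a_i-1$ and $b_i-1$ are at least $1$, because the entries of $\ba$ and $\bb$ are at least $2$.

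\textbf{Step 3: applying the lemma.} After reordering the variables, which does not change the isomorphism class, we may assume $a_1\le\cdots\le a_n$ and $b_1\le\cdots\le b_n$. Lemma~\ref{Lem: Key Lemma}, applied with $r=s=n$ to the tuples $(a_i-1)_i$ and $(b_i-1)_i$, gives $a_i-1=b_i-1$ for all $i$. Hence $\ba\sim\bb$.

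The main point requiring care is Step~1, namely the intrinsic nature of the Jacobian ideal. I would handle it through the invariance of Fitting ideals of $\Omega_{B/k}$ under $k$-algebra isomorphism, rather than arguing with coordinates, because $\varphi$ need not preserve the grading or the embedding in $\A^n_k$. Step~2 is also necessary: without $f_\ba\in J$, the quotient would not reduce to a monomial complete intersection of the shape required by the lemma.
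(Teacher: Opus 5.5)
Your proposal is correct and takes essentially the paper's route. You show that the Jacobian quotient $k[X_1,\ldots,X_n]/(X_1^{a_1-1},\ldots,X_n^{a_n-1})$ is an isomorphism invariant, using the Euler identity in characteristic $0$, and then apply Lemma~\ref{Lem: Key Lemma} with $r=s=n$. The only difference is that you justify the invariance directly over $k$ via $\operatorname{Fitt}_{n-1}(\Omega_{B_\ba/k})$, which makes precise what the paper means by ``the scheme-theoretic singular locus is preserved under isomorphisms'' and shows that the paper's base change to $\bar{k}$ is unnecessary.
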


\begin{proof}
The implication ``$\ba\sim\bb \Rightarrow V_\ba\cong_k V_\bb$'' is obvious by permuting the coordinates. We prove the converse. Assume that $V_\ba\cong_k V_\bb$. Let $\bar{k}$ be an algebraic closure of $k$. Base-changing the given isomorphism to $\bar{k}$ yields $V_{\ba,\bar{k}}\cong_{\bar{k}} V_{\bb,\bar{k}}$, where $$V_{\ba,\bar{k}}:=V_\ba\times_{\Spec k} \Spec \bar{k}=\Spec B_{\ba,\bar{k}}= \Spec(B_\ba\otimes_k \bar{k})=\Spec\!\left(
\frac{\bar{k}[X_1,\ldots,X_n]}
{(X_1^{a_1}+\cdots+X_n^{a_n})}
\right),$$ and the same for $V_{\bb,\bar{k}}$. Let $f_\ba:=X_1^{a_1}+\cdots+X_n^{a_n}$, and $f_\bb:=X_1^{b_1}+\cdots+X_n^{b_n}$. Since the scheme-theoretic singular locus is preserved under isomorphisms, the above $\bar{k}$-isomorphism induces an isomorphism $\Sing(V_{\ba,\bar{k}})\cong_{\bar{k}} \Sing(V_{\bb,\bar{k}})$ as $\bar{k}$-schemes.

Since $V_{\ba, \bar{k}}$ is a hypersurface defined over $\bar{k}$, the Jacobian criterion yields, 
\[
\Sing(V_{\ba,\bar{k}})
=
\Spec\!\left(
\frac{\bar{k}[X_1,\ldots,X_n]}
{(f_\ba,\partial_1f_\ba,\ldots,\partial_nf_\ba)}
\right),
\]
where $\partial_i\equiv\frac{\partial}{\partial X_i}$ for all $1 \le i \le n$. Since $\partial_i f_\ba=a_iX_i^{a_i-1}$ for every $1\le i\le n$
and $\operatorname{char}(k)=\operatorname{char}(\bar{k})=0$, we have
\[
f_\ba
=
\sum_{i=1}^n
{a_i}^{-1}
X_i\frac{\partial f_\ba}{\partial X_i}
\in
(\partial_1f_\ba,\ldots,\partial_nf_\ba).
\]
Hence $(f_\ba,\partial_1f_\ba,\ldots,\partial_nf_\ba)
=
(X_1^{a_1-1},\ldots,X_n^{a_n-1}) \subseteq \bar{k}[X_1,\ldots,X_n]$, and therefore
\[
\Sing(V_{\ba,\bar{k}})
=
\Spec\!\left(
\frac{\bar{k}[X_1,\ldots,X_n]}
{(X_1^{a_1-1},\ldots,X_n^{a_n-1})}
\right) \quad \text{and} \quad \Sing(V_{\bb,\bar{k}})
=
\Spec\!\left(
\frac{\bar{k}[X_1,\ldots,X_n]}
{(X_1^{b_1-1},\ldots,X_n^{b_n-1})}
\right).
\]
Since $\Sing(V_{\ba,\bar{k}})\cong_{\bar{k}}\Sing(V_{\bb,\bar{k}})$ as affine $\bar{k}$-schemes, it follows that
\[
\Gamma(\Sing(V_{\ba,\bar{k}}),\mathcal O_{\Sing(V_{\ba,\bar{k}})})
\cong_{\bar{k}}
\Gamma(\Sing(V_{\bb,\bar{k}}),\mathcal O_{\Sing(V_{\bb,\bar{k}})}).
\]
That is,
\[
\frac{\bar{k}[X_1,\ldots,X_n]}
{(X_1^{a_1-1},\ldots,X_n^{a_n-1})}
\cong_{\bar{k}\text{-alg}}
\frac{\bar{k}[X_1,\ldots,X_n]}
{(X_1^{b_1-1},\ldots,X_n^{b_n-1})}.
\]
Therefore, Lemma~\ref{Lem: Key Lemma} yields
\[
(a_1-1,\ldots,a_n-1)\sim(b_1-1,\ldots,b_n-1),
\]
and hence $\ba\sim\bb$. This completes the proof.
\end{proof}

As an application of Theorem~\ref{Thm: Exponent rigidity of B-P variety}, we obtain the following analytic rigidity result over $\C$. Note that this may also be deduced from the main result of Yoshinaga--Suzuki \cite{YS1978}.

\begin{cor}\label{Cor: Exponent rigidity of B-P variety in analytic setting over complex numbers}
Let $V_\ba$ and $V_\bb$ be two Brieskorn--Pham varieties over $\C$ corresponding to two tuples $\ba,\bb\in(\N_{\geq 2})^n$ for $n\geq 2$. Then
\[
\widehat{V}_\ba \cong_{\rm bihol} \widehat{V}_\bb
\iff
V_\ba^{\rm an} \cong_{\rm bihol} V_\bb^{\rm an}
\iff
\ba\sim\bb.
\]
\end{cor}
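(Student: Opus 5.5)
The plan is to run the argument of Theorem~\ref{Thm: Exponent rigidity of B-P variety} again, replacing the polynomial ring by the convergent and formal power series rings and using Remark~\ref{Rem 1} in place of Lemma~\ref{Lem: Key Lemma}.

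\emph{Easy directions.} If $\ba\sim\bb$, permuting coordinates gives isomorphisms of $B^{\rm an}_\ba$ with $B^{\rm an}_\bb$ and of $\widehat{B}_\ba$ with $\widehat{B}_\bb$. By Theorem~\ref{Thm: Artin's approximation}, the conditions $V_\ba^{\rm an}\cong_{\rm bihol}V_\bb^{\rm an}$, $B^{\rm an}_\ba\cong B^{\rm an}_\bb$ and $\widehat{B}_\ba\cong\widehat{B}_\bb$ are mutually equivalent. Here $\widehat{V}_\ba\cong_{\rm bihol}\widehat{V}_\bb$ is interpreted as an isomorphism of the algebroid spaces, i.e.\ of their complete local rings. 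So it suffices to show that $\widehat{B}_\ba\cong_{\C\text{-alg}}\widehat{B}_\bb$ forces $\ba\sim\bb$.

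\emph{Main step.} Let $\varphi:\widehat{B}_\ba\to\widehat{B}_\bb$ be a $\C$-algebra isomorphism. Both rings are complete local, so $\varphi$ is local, and it preserves the maximal ideal and hence the $\mathfrak m$-adic topology. The intrinsic invariant to use is the Tjurina algebra. For a hypersurface $A=\C[\![X]\!]/(f)$, the algebra $\C[\![X]\!]/(f,\partial_1 f,\ldots,\partial_n f)$ is intrinsic to $A$: it is $A/J$, where $J$ is the Jacobian (Fitting) ideal $\operatorname{Fitt}_{n-1}(\Omega_{A/\C})$ computed from the finite module of universally finite (complete) differentials. Fitting ideals are invariant under ring isomorphism, so $\varphi$ induces an isomorphism of these quotients.

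If one prefers to avoid differential modules, an alternative is to lift $\varphi$ to an automorphism $\Phi$ of $\C[\![X_1,\ldots,X_n]\!]$ carrying $(f_\ba)$ to $(f_\bb)$. This works when the embedding dimensions agree, and here they do since every $a_i\ge2$ (and $a_i\ge 3$ for linear terms is irrelevant: the embedding dimension is $n$ because $f$ lies in $\mathfrak m^2$). Then the chain rule gives $\Phi(f_\ba,\partial f_\ba)=(f_\bb,\partial f_\bb)$, since $u f_\bb$ has the same Tjurina ideal as $f_\bb$.

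The Euler relation shows, exactly as in the proof of Theorem~\ref{Thm: Exponent rigidity of B-P variety}, that
\[
(f_\ba,\partial_1 f_\ba,\ldots,\partial_n f_\ba)=(X_1^{a_1-1},\ldots,X_n^{a_n-1})
\]
in $\C[\![X]\!]$. The same holds for $\bb$.

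\emph{Conclusion.} We therefore obtain
\[
\C[\![X]\!]/(X_i^{a_i-1})\cong \C[\![X]\!]/(X_i^{b_i-1}).
\]
Entries with $a_i-1=1$ are allowed: the statement of Lemma~\ref{Lem: Key Lemma} permits exponents equal to $1$, with $r=s=n$. Remark~\ref{Rem 1} applies to these artinian rings, and in fact these quotients coincide with the polynomial ones, so Lemma~\ref{Lem: Key Lemma} applies directly. This gives $(a_i-1)\sim(b_i-1)$, hence $\ba\sim\bb$.

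\emph{Main obstacle.} I expect the delicate step to be justifying that the isomorphism of complete local rings induces an isomorphism of Tjurina algebras. The approach I would take is the lifting argument: choose preimages in $\C[\![X]\!]$ of the images $\varphi(\bar X_i)$, giving a ring endomorphism $\Phi$ of $\C[\![X]\!]$. $\Phi$ is surjective on the cotangent space $\mathfrak m/\mathfrak m^2$ because both embedding dimensions equal $n$, so $\Phi$ is an automorphism. It maps $(f_\ba)$ onto $(f_\bb)$ because it induces $\varphi$. Finally, the Jacobian ideal of $u\cdot g\circ\Phi$ equals $\Phi$ applied to the Jacobian ideal of $g$, modulo $g$.
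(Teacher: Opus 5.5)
Your proposal is correct and is essentially the paper's argument. After Theorem~\ref{Thm: Artin's approximation}, the paper simply retraces the proof of Theorem~\ref{Thm: Exponent rigidity of B-P variety} for the local algebras (invariance of the singular-locus/Tjurina algebra, the Euler relation, and Lemma~\ref{Lem: Key Lemma} via Remark~\ref{Rem 1}). Your lifting of $\varphi$ to an automorphism $\Phi$ of $\C[\![X_1,\ldots,X_n]\!]$ with $\Phi(f_\ba)=u f_\bb$ makes explicit the step the paper leaves implicit, namely that the Tjurina algebra is an invariant of the local ring; working formally rather than analytically makes no difference. You also observe that these Tjurina algebras are artinian and hence equal to the polynomial quotients, which lets you apply Lemma~\ref{Lem: Key Lemma} directly without Remark~\ref{Rem 1}.
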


\begin{proof}
The first equivalence follows from Theorem~\ref{Thm: Artin's approximation}. Assume that $V_\ba^{\rm an} \cong_{\rm bihol} V_\bb^{\rm an}$. Then the corresponding analytic local algebras $B_\ba^{\rm an}$ and $B_\bb^{\rm an}$ are isomorphic. Retracing the arguments in the proofs of Lemma~\ref{Lem: Key Lemma} (see Remark \ref{Rem 1}) and Theorem~\ref{Thm: Exponent rigidity of B-P variety}, we obtain $\ba\sim\bb$. The converse implication is immediate.
\end{proof}

\subsection{Generalized Zariski cancellation for good $\C^*$-varieties with a unique singularity}\label{subse3.2}

We begin with the following elementary observation, which will be used repeatedly in the proof of the main theorem of this subsection.

\begin{lem}\label{Lem: Isolated implies indecomposable}
Let $X$ be a connected algebroid space of positive dimension with an isolated singularity. Then $X$ is indecomposable.
\end{lem}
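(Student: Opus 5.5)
We argue by contradiction. Suppose $X\cong X_1\times X_2$ with $X_1=\Spec \C[\![Y]\!]/I$ and $X_2=\Spec \C[\![Z]\!]/J$ both non-trivial, that is, different from the reduced point. Here ``isolated singularity'' means that the singular locus of $X$, defined via the Jacobian ideal of $\widehat{\mathcal{O}}$, is supported only at the closed point. Equivalently, the punctured spectrum is regular.

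The first step is to analyse each factor. The singular locus of a product of algebroid spaces is
\[
\Sing(X_1\times X_2)=\bigl(\Sing X_1\times X_2\bigr)\cup\bigl(X_1\times \Sing X_2\bigr).
\]
This holds by the Jacobian criterion over $\C$: the Jacobian matrix of the product is block diagonal. Alternatively, regularity of a local ring of the completed tensor product is equivalent to regularity of both factors over a field of characteristic zero.

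Next we dispose of the case where some factor is a fat point. Since each $X_i$ is non-trivial, it is either of positive dimension or non-reduced of dimension zero. If, say, $\dim X_1=0$, then $X_1$ is a non-reduced fat point, so $\Sing X_1=X_1$ and $\Sing X=X$. But $X$ has positive dimension, so its singular locus is not just the closed point. This contradicts the hypothesis; if "isolated" is read as "generically reduced", the same conclusion follows because $X$ is then nowhere reduced. Hence $\dim X_1,\dim X_2\geq 1$.

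Now both factors have positive dimension. If neither $X_1$ nor $X_2$ is smooth, then the closed point $o_1\in\Sing X_1$, so $\Sing X\supseteq \{o_1\}\times X_2$. This set has dimension $\dim X_2\ge 1$, which contradicts isolatedness. So one factor is smooth, say $X_2\cong \Spec\C[\![Z_1,\dots,Z_m]\!]$ with $m\ge1$. Then $\Sing X=\Sing X_1\times X_2$. If $X_1$ is singular at its closed point, this set has dimension at least $m\ge1$, again a contradiction. Therefore $X_1$ is also smooth, and so $X$ itself is smooth, that is, $X$ is a formal power series ring.

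A smooth $X$ of positive dimension is decomposable, since $\C[\![T_1,\dots,T_d]\!]=\C[\![T_1]\!]\hat\otimes\C[\![T_2,\dots,T_d]\!]$ when $d\ge2$. Likewise, $\C[\![T]\!]$ has no non-trivial factorization for dimension reasons. So the statement as literally given must intend "isolated singularity" to mean that the closed point is genuinely singular. Under that reading, the smooth case is excluded and we have the contradiction. I will make this convention explicit at the start of the proof.

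The main obstacle is justifying the product formula for singular loci in the algebroid category, since the points involved are non-closed primes of $\C[\![Y,Z]\!]/(I,J)$. I would handle it with the Jacobian criterion for complete local rings over $\C$, using Nagata's version for formal power series. The Jacobian of the generators of $I$ and $J$ is block diagonal, so its rank at a prime $P$ is the sum of the ranks at the contractions of $P$. Thus $P$ is regular if and only if both contractions are regular in their factors. Flatness of the completed tensor product over each factor gives the dimension comparisons needed for this step.
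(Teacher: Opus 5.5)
Your proof is correct and follows the same route as the paper. Both arguments rest on the product formula $\Sing(X_1\times X_2)=(\Sing X_1\times X_2)\cup(X_1\times\Sing X_2)$ and a dimension count; the paper packages the count as $\dim\Sing X=\max\{\dim\Sing Y+\dim Z,\ \dim Y+\dim\Sing Z\}$ where you use a case split. Your explicit convention that the closed point is genuinely singular is exactly what the paper assumes implicitly when it writes $\dim(\Sing X)=0$ (with $\dim\varnothing=-\infty$), and your Jacobian-criterion sketch supplies a justification for the product formula, which the paper simply calls clear.
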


\begin{proof}
Suppose that $X \cong Y \times Z$ for algebroid spaces $Y$ and $Z$. Since $\dim(X)>0$, at least one of the factors has positive dimension. Without loss of generality, assume that $\dim(Y)>0$. Clearly
\[
\Sing(X)
=
(\Sing(Y)\times Z)
\cup
(Y\times \Sing(Z)),
\]
and hence
\[
\dim(\Sing(X))
=
\max\{
\dim(\Sing(Y))+\dim(Z),
\,
\dim(Y)+\dim(\Sing(Z))
\}.
\]
Since $X$ has an isolated singularity, we have $\dim(\Sing(X))=0$. As $\dim(Y)>0$, the equality above forces
\[
\dim(Z)=0
\qquad\text{and}\qquad
\Sing(Z)=\varnothing.
\]
Thus $Z$ is a smooth zero-dimensional connected algebroid space, hence a reduced point. Therefore $X \cong Y$, showing that $X$ is indecomposable.
\end{proof}

\begin{rem}
In the above proof, we adopt the convention that $\dim(\varnothing)=-\infty$.
\end{rem}

We now prove the key technical result from which the main theorem of this subsection follows.

\begin{thm}\label{Thm: Our main cancellation theorem}
Let $X$ and $Y$ be connected (not necessarily irreducible) algebraic varieties over $\C$, each having a unique singular point, say $P\in X$ and $Q\in Y$. Let $Z$ be a separated scheme over $\C$ (not necessarily connected) having a smooth point such that
\[
X\times Z \cong_{\C} Y\times Z.
\]
Then the analytic germs $(X,P)$ and $(Y,Q)$ are analytically isomorphic.
\end{thm}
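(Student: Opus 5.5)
Fix an isomorphism $\Phi\colon X\times Z\xrightarrow{\sim} Y\times Z$ of $\C$-schemes and a smooth point $z_0\in Z$; let $d$ be the dimension of $Z$ at $z_0$. The plan is to produce one point $(P,z)$ that $\Phi$ sends to a point of the form $(Q,z')$ with both $z,z'$ smooth, and then cancel the smooth factors using Hauser--M\"uller.

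\textbf{Step 1: $\Phi$ respects the singular loci.} An isomorphism preserves the singular locus, so
\[
\Sing(X\times Z)=(\{P\}\times Z)\cup(X\times\Sing Z)
\]
is carried onto $(\{Q\}\times Z)\cup(Y\times\Sing Z)$. To exploit this, I would work locally near $\{P\}\times Z^{\rm sm}$. Choose a connected component $Z_0$ of the smooth locus $Z^{\rm sm}$ containing $z_0$. The set $\{P\}\times Z_0$ is an irreducible closed subset of $\Sing(X\times Z)$, and it is not contained in $X\times\Sing Z$. A dimension count along fibres shows that, near such points, the singular locus has dimension $\dim Z_0$ and is locally of the form $\{P\}\times Z$. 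Hence $\Phi(\{P\}\times Z_0)$ cannot lie entirely inside $Y\times \Sing Z$, at least generically. Because $\Phi$ is a homeomorphism on singular loci, it maps locally closed strata to strata, where the stratification is the one by the singular locus of the singular locus. Tracking this, I expect to find a point $z\in Z_0$ with $\Phi(P,z)=(Q,z')$ and $z'\in Z^{\rm sm}$.

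\textbf{Main obstacle.} The risk is that $Z$ has components of different dimensions, or singularities that mimic $\{Q\}\times(\cdot)$. I would handle this in two ways. First, choose $z_0$ on a component of maximal dimension among smooth points. Second, use the local analytic product structure: at a generic point the germ of $\Sing$ is a smooth germ of the correct dimension. The set of points of $X\times Z$ where the germ is analytically a product (isolated singularity)$\times(\C^d,0)$ should be intrinsic, so it is preserved by $\Phi$.

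\textbf{Step 2: pass to germs and complete.} Having the point, $\Phi$ induces an analytic isomorphism of germs
\[
(X,P)\times(\C^{d},0)\cong(Y,Q)\times(\C^{d'},0),
\]
since $Z$ is smooth at $z$ and at $z'$. Comparing dimensions gives $d=d'$: the germ $(X,P)$ has the dimension of $X$ at $P$, and likewise for $Y$. If $X$ and $Y$ had different dimensions at these points, I would compare instead the dimensions of the singular loci of the product germs, which are $d$ and $d'$ respectively, to conclude $d=d'$.

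\textbf{Step 3: cancel.} Pass to completions to get algebroid spaces $\widehat{(X,P)}\times\widehat{(\C,0)}^d\cong\widehat{(Y,Q)}\times\widehat{(\C,0)}^d$. Each $\widehat{(\C,0)}$ is indecomposable, being one-dimensional. The factors $\widehat{(X,P)}$ and $\widehat{(Y,Q)}$ are indecomposable by Lemma~\ref{Lem: Isolated implies indecomposable}, because they are connected algebroid germs with an isolated singularity and positive dimension. If one of them had dimension $0$ it would be a nonreduced point; I would treat that case separately via the nilradical, or it is excluded since the variety is reduced. By Theorem~\ref{Thm: H-M_Unique factorization for algebroid spaces}, the only non-smooth indecomposable factors are $\widehat{(X,P)}$ and $\widehat{(Y,Q)}$, and uniqueness of the factorization forces them to be isomorphic. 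Theorem~\ref{Thm: Artin's approximation} then upgrades this to an analytic isomorphism $(X,P)\cong(Y,Q)$. Alternatively, one can apply \cite[Theorem~1]{HM1990} directly.
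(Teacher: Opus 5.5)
\textbf{There is a genuine gap in Step 1, and it cannot be repaired as formulated: the point you are looking for need not exist.} Take $Y=X$, $Q=P$, $Z=X$, and let $\Phi\colon X\times X\to X\times X$ be the swap $(x,z)\mapsto(z,x)$. For every $z\in Z^{\rm sm}=X\setminus\{P\}$ we get $\Phi(P,z)=(z,P)$, whose first coordinate is a smooth point of $Y$.
\begin{itemize}
\item So $\Phi(\{P\}\times Z^{\rm sm})$ lies entirely in $Y\times\Sing Z$, contrary to your assertion.
\item The only $z$ with $\Phi(P,z)\in\{Q\}\times Z$ is $z=P$, which is a singular point of $Z$.
\end{itemize}
Your dimension count does not rule this out, because $Y\times\Sing Z$ can contain pieces of dimension $\dim Z_0$ (here $X\times\{P\}$). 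The germ type $(X,P)\times(\C^d,0)$ is indeed intrinsic and preserved by $\Phi$, but that does not force the image point to have the form $(Q,\text{smooth})$. It can be $(y,z')$ with $y\in Y^{\rm sm}$ and $(Z,z')\cong(X,P)\times(\C^{d-n},0)$. Choosing $z_0$ on a component of maximal dimension does not help either, since $Z=X$ is irreducible here. Step 1 is also only heuristic as written (``I expect'', ``should be intrinsic''), but the real problem is that its conclusion is false for some $\Phi$.

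\textbf{What is missing is a way to handle exactly this case.} The paper does it by iteration. Suppose $\Phi(P,z_{i-1})=(y_i,z_i)$ with $y_i\in Y^{\rm sm}$. Then Hauser--M\"uller cancellation together with unique factorization gives
\[
\widehat{(Z,z_i)}\cong\widehat{(X,P)}^{\,i}\times\widehat{(\C^{m-in},\oo)},
\]
and in particular $m\ge in$. Hence after at most $\lfloor m/n\rfloor$ steps one reaches a point $z_\ell$ with $\Phi(P,z_\ell)=(Q,w_\ell)$. At that point $z_\ell$ and $w_\ell$ are in general singular; in the swap example $\ell=1$ and $z_1=w_1=P$.

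\textbf{Your Step 3 must therefore also change.} It cancels smooth factors $\widehat{(\C,0)}^d$ on both sides, which is not available here. Instead one uses that $\widehat{(Y,Q)}$ is an indecomposable factor of $\widehat{(X,P)}^{\,\ell+1}\times\widehat{(\C^{m-n\ell},\oo)}$. The only indecomposable factors of that product are copies of $\widehat{(X,P)}$ and of $\widehat{(\C,0)}$. Since $\widehat{(Y,Q)}\not\cong\widehat{(\C,0)}$, unique factorization forces $\widehat{(Y,Q)}\cong\widehat{(X,P)}$.
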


\begin{proof}
Let $\varphi:X\times Z \xrightarrow{\sim} Y\times Z$ be an isomorphism. Put $n:=\dim(X)=\dim(Y)$.

Choose a smooth point $z_0\in Z$. Since $z_0$ is smooth, it belongs to a unique irreducible component of $Z$. Let $m$ denote the dimension of that irreducible component. Thus, $(Z,z_0)\cong (\C^m,\oo)$.

\medskip
\textbf{Claim.} {\it There exist a non-negative integer $\ell \leq \lfloor \frac{m}{n}\rfloor$ and a sequence of points ${z_0,\ldots,z_\ell}\subseteq Z$ satisfying the following two properties:
\begin{enumerate}[\indent\rm(1)]
\item for all $0\leq i\leq \ell-1$, $\varphi(P,z_i)=(y_{i+1},z_{i+1})$, where $y_{i+1}\in Y^{\rm sm}$, and $\varphi(P,z_\ell)=(Q,w_\ell)$ for some $w_\ell\in Z$, i.e., $\ell$ is the least non-negative integer such that the projection of $\varphi(P,z_\ell)$ onto $Y$ is $Q$;

\item for all $0\leq i\leq \ell$,
\begin{equation}\label{eqn1}
\widehat{(Z,z_i)}
\cong
\underbrace{\widehat{(X,P)}\times \cdots \times \widehat{(X,P)}}_{i\text{ copies}}
\times
\widehat{(\C^{m-in},\oo)}.
\end{equation}
\end{enumerate}
}

\smallskip

\textit{Proof of the claim.}
Clearly $z_0$ satisfies \eqref{eqn1}.

If $\varphi(P,z_0)=(Q,w)$ for some $w\in Z$, then we choose $\ell=0$ and the claim follows. Thus, assume that the projection of $\varphi(P,z_0)$ onto $Y$ is not $Q$.

Let $i>0$ and suppose that ${z_0,\ldots,z_{i-1}}$ has been chosen satisfying:

\begin{enumerate}[\indent\rm(1)]
\item for all $0\leq j\leq i-2$, $\varphi(P,z_j)=(y_{j+1},z_{j+1})$, where $y_{j+1}\in Y^{\rm sm}$;

\item for all $0\leq j\leq i-1$,
\[
\widehat{(Z,z_j)}
\cong
\underbrace{\widehat{(X,P)}\times \cdots \times \widehat{(X,P)}}_{j\text{ copies}}
\times
\widehat{(\C^{m-jn},\oo)}.
\]
\end{enumerate}

Exactly one of the following two cases can occur.

\medskip

\textbf{Case 1.} \emph{$\varphi(P,z_{i-1})=(Q,w_{i-1})$ for some $w_{i-1}\in Z$.}

\smallskip

In this case, we set $\ell=i-1$, and the required sequence has been obtained.

\medskip

\textbf{Case 2.} \emph{The projection of $\varphi(P,z_{i-1})$ onto $Y$ is a smooth point, say $y_i\in Y^{\rm sm}$.}

\smallskip

Define $z_i$ to be the projection of $\varphi(P,z_{i-1})$ onto $Z$. Since $\varphi$ is an isomorphism, it induces an isomorphism of algebroid spaces
\[
\widehat{(X,P)}
\times
\widehat{(Z,z_{i-1})}
\cong
\widehat{(Y,y_i)}
\times
\widehat{(Z,z_i)}.
\]

Using the induction hypothesis, we obtain
\begin{equation}\label{eqn2}
\underbrace{\widehat{(X,P)}\times \cdots \times \widehat{(X,P)}}_{i\text{ copies}}
\times
\widehat{(\C^{m-n(i-1)},\oo)}
\cong
\widehat{(\C^n,\oo)}
\times
\widehat{(Z,z_i)}.
\end{equation}

We claim that $m\geq in$. Suppose otherwise. Then $m<in$, and therefore $m-n(i-1)<n$. Applying Hauser--M\"uller's cancellation theorem \cite[Theorem 1]{HM1990} to \eqref{eqn2}, we obtain
\[
\underbrace{\widehat{(X,P)}\times \cdots \times \widehat{(X,P)}}_{i\text{ copies}}
\cong
\widehat{(\C^{in-m},\oo)}
\times
\widehat{(Z,z_i)}.
\]
Since $in-m>0$, the right-hand side contains $\widehat{(\C,0)}$ as an indecomposable factor. On the other hand, the left-hand side does not contain $\widehat{(\C,0)}$ as an indecomposable factor because $\widehat{(X,P)}$ is indecomposable by Lemma \ref{Lem: Isolated implies indecomposable}, and $\widehat{(X,P)}\not\cong \widehat{(\C,0)}$ since $P$ is a singular point of $X$. This contradicts the uniqueness of factorization theorem for algebroid spaces due to Hauser--M\"uller (Theorem \ref{Thm: H-M_Unique factorization for algebroid spaces}). Hence $m\geq in$, as claimed. Applying Hauser--M\"uller's cancellation theorem once again to \eqref{eqn2}, we obtain
\[
\underbrace{\widehat{(X,P)}\times \cdots \times \widehat{(X,P)}}_{i\text{ copies}}
\times
\widehat{(\C^{m-in},\oo)}
\cong
\widehat{(Z,z_i)},
\]
whence $z_i$ satisfies \eqref{eqn1}.

If Case $1$ never occurs, then Case $2$ occurs for every positive integer $i$. By the above argument, this would imply that $m\geq in$ for every positive integer $i$, which is impossible for $i>\frac{m}{n}$. Therefore Case $1$ must occur for some non-negative integer $\ell\leq \lfloor \frac{m}{n}\rfloor$. This proves the claim.
\qed

\medskip

\textit{Proof of Theorem \ref{Thm: Our main cancellation theorem} continued.}
Since $\varphi(P,z_\ell)=(Q,w_\ell)$, the isomorphism $\varphi$ induces an isomorphism of algebroid spaces
\[
\widehat{(X,P)}
\times
\widehat{(Z,z_\ell)}
\cong
\widehat{(Y,Q)}
\times
\widehat{(Z,w_\ell)}.
\]

Using \eqref{eqn1}, we obtain
\[
\underbrace{\widehat{(X,P)}\times \cdots \times \widehat{(X,P)}}_{\ell+1\text{ copies}}
\times
\widehat{(\C^{m-n\ell},\oo)}
\cong
\widehat{(Y,Q)}
\times
\widehat{(Z,w_\ell)}.
\]

By Lemma \ref{Lem: Isolated implies indecomposable}, both $\widehat{(X,P)}$ and $\widehat{(Y,Q)}$ are indecomposable algebroid spaces. By the uniqueness of factorization theorem for algebroid spaces due to Hauser--M\"uller (Theorem \ref{Thm: H-M_Unique factorization for algebroid spaces}), the indecomposable factor $\widehat{(Y,Q)}$ must occur among the indecomposable factors on the left-hand side. Now, the indecomposable factors occurring on the left-hand side are the $\ell+1$ copies of $\widehat{(X,P)}$ and, if $m-n\ell>0$, the $m-n\ell$ copies of $\widehat{(\C,0)}$. Since $Q$ is a singular point of $Y$, we have
$\widehat{(Y,Q)}\not\cong\widehat{(\C,\oo)}$. Therefore, $\widehat{(Y,Q)}\cong \widehat{(X,P)}$.

Finally, by Artin's approximation theorem (Theorem \ref{Thm: Artin's approximation}), this formal isomorphism algebraizes to an analytic isomorphism of complex analytic germs. Hence, $(X,P)\cong (Y,Q)$. This completes the proof.
\end{proof}

As an immediate consequence of Theorem \ref{Thm: Our main cancellation theorem}, we obtain the following result for varieties with isolated singularities.

\begin{cor}\label{Cor: Cancellation theorem for isolated singularities}
Let $X$ and $Y$ be connected (not necessarily irreducible) algebraic varieties over $\C$ having only isolated singularities, and let $Z$ be a separated $\C$-scheme (not necessarily connected) having a smooth point. Suppose that
\[
X\times Z \cong_{\C} Y\times Z.
\]
Then, for every singular point $P\in \Sing(X)$, there exists a singular point $Q\in \Sing(Y)$ such that the analytic germs $(X,P)$ and $(Y,Q)$ are analytically isomorphic.
\end{cor}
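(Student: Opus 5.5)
The idea is to reduce Corollary~\ref{Cor: Cancellation theorem for isolated singularities} to Theorem~\ref{Thm: Our main cancellation theorem} by localising around singular points, rerunning the chain-of-points argument from its proof. The one structural change concerns where the chain can stop: in the present setting $\varphi(P,z_i)$ may land over \emph{any} singular point of $Y$, not just a single designated one. Since only the local algebroid structure near the points $(P,z_i)$ is used in that argument, one can essentially repeat it verbatim.

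Fix $P\in\Sing(X)$ and an isomorphism $\varphi:X\times Z\xrightarrow{\sim}Y\times Z$. Choose a smooth point $z_0\in Z$ whose component has dimension $m$, and put $n:=\dim_P X$. Because $P$ is isolated in $\Sing(X)$, the germ $\widehat{(X,P)}$ is an algebroid space with an isolated singularity. If its dimension is positive, Lemma~\ref{Lem: Isolated implies indecomposable} shows it is indecomposable. If its dimension is zero, it is a non-reduced fat point (since $P$ is singular), which is still a single nontrivial factor; I would treat this case separately, possibly by noting that connectedness plus isolatedness forces positive dimension in the intended setting.

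Now build $z_0,z_1,\ldots$ exactly as in the Claim. At each step, write $\varphi(P,z_{i-1})=(y_i,z_i)$. If $y_i$ is smooth in $Y$, then $\widehat{(Y,y_i)}\cong\widehat{(\C^{n},\oo)}$; here the local dimension of $Y$ at $y_i$ equals $n$, because $\varphi$ preserves local dimension and $Z$ is smooth of dimension $m$ near $z_{i-1}$. The Hauser--M\"uller cancellation argument then gives \eqref{eqn1} for $z_i$, together with $m\ge in$. The process therefore stops at some $\ell\le\lfloor m/n\rfloor$ with $\varphi(P,z_\ell)=(Q,w_\ell)$ and $Q\in\Sing(Y)$. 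At that point we have
\[
\widehat{(X,P)}^{\,\ell+1}\times\widehat{(\C^{m-n\ell},\oo)}\cong\widehat{(Y,Q)}\times\widehat{(Z,w_\ell)}.
\]
Since $Q$ is an isolated singular point, $\widehat{(Y,Q)}$ is indecomposable and not isomorphic to $\widehat{(\C,0)}$. Unique factorization (Theorem~\ref{Thm: H-M_Unique factorization for algebroid spaces}) then forces $\widehat{(Y,Q)}\cong\widehat{(X,P)}$, and Theorem~\ref{Thm: Artin's approximation} upgrades this to an analytic isomorphism.

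The main obstacle is keeping the bookkeeping of local dimensions honest when $X$ or $Y$ is not equidimensional. One must ensure that the smooth germs $\widehat{(Y,y_i)}$ always have dimension $n=\dim_P X$, which holds because $\varphi$ identifies $\dim_{(P,z_{i-1})}(X\times Z)=n+m$ with $\dim_{y_i}Y+\dim_{z_i}Z$, and the latter is computed from \eqref{eqn1}. Alternatively, one can simply restrict to open neighbourhoods: replace $X$ by $X\setminus(\Sing(X)\setminus\{P\})$, which has $P$ as its unique singular point, and argue directly from the local version of the proof of Theorem~\ref{Thm: Our main cancellation theorem}.
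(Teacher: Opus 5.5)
Your route is the paper's: rerun the chain argument from the proof of Theorem~\ref{Thm: Our main cancellation theorem}, stopping as soon as $\varphi(P,z_\ell)$ lies over some point of $\Sing(Y)$. The problem is the step you flag as the main obstacle.

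\textbf{The dimension claim can fail.} You assert that $\dim_{y_i}Y=n:=\dim_PX$ for every smooth $y_i$ in the chain, and your justification is circular. Preservation of local dimension only gives $\dim_PX+\dim_{z_{i-1}}Z=\dim_{y_i}Y+\dim_{z_i}Z$, and $\dim_{z_i}Z$ is exactly what \eqref{eqn1} at $z_i$ is supposed to deliver. Also, $Z$ is not smooth at $z_{i-1}$ once $i\ge 2$.

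Concretely, let $X\subset\C^3$ be the union of the plane $\{z=0\}$, the $z$-axis and the line $\{x=0,\ z=1\}$. It is connected, its singular points are the origin and $P=(0,0,1)$, and $\dim_PX=1$. Take $Y=Z=X$, $\varphi(x,x')=(x',x)$, and $z_0$ a smooth point of the plane, so $m=2$. Then $\varphi(P,z_0)=(z_0,P)$, so $\dim_{y_1}Y=2\neq 1$. Moreover $\widehat{(Z,z_1)}=\widehat{(X,P)}$, not $\widehat{(X,P)}\times\widehat{(\C^{1},\oo)}$ as your version of \eqref{eqn1} predicts.

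The paper's terse proof has the same imprecision, since it takes $\widehat{(Y,y_i)}\cong\widehat{(\C^n,\oo)}$ with $n=\dim X$. Choosing $z_0$ on the line $\{x=0,\ z=1\}$ in the same example breaks that.

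\textbf{The repair.} It is routine but must be made. Set $n_i:=\dim_{y_i}Y$ and carry $e_i:=m-n_1-\cdots-n_i$ in place of $m-in$. In Case~2 one has $\widehat{(X,P)}^{\,i}\times\widehat{(\C^{e_{i-1}},\oo)}\cong\widehat{(\C^{n_i},\oo)}\times\widehat{(Z,z_i)}$. Since $\widehat{(X,P)}$ is indecomposable and $\not\cong\widehat{(\C,0)}$, Theorem~\ref{Thm: H-M_Unique factorization for algebroid spaces} gives $n_i\le e_{i-1}$ and $\widehat{(Z,z_i)}\cong\widehat{(X,P)}^{\,i}\times\widehat{(\C^{e_i},\oo)}$.

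The chain stops within $m$ steps because each $n_i\ge 1$. This holds because $Y$ is connected and not a point. Otherwise $X\times Z\cong Z$, and an irreducible component of $Z$ of minimal dimension would be isomorphic to some $X_\alpha\times Z_\beta$ with $X_\alpha$ an irreducible component of $X$ of dimension $\ge 1$, which is absurd. The final step is then as you wrote, with $e_\ell$ in place of $m-n\ell$.

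\textbf{Two smaller points.} Your zero-dimensional case is vacuous, but because of reducedness, not connectedness. Varieties here are reduced, so a point of local dimension $0$ is a reduced isolated point, hence smooth. Thus $\dim_PX,\dim_QY\ge 1$ and Lemma~\ref{Lem: Isolated implies indecomposable} applies.

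Also, shrinking $X$ to $X\setminus(\Sing(X)\setminus\{P\})$ does not reduce matters to the theorem. The map $\varphi$ does not carry that open set onto a product $Y'\times Z$.
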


\begin{proof}
Fix a singular point $P\in \Sing(X)$. Repeating verbatim the proof of Theorem \ref{Thm: Our main cancellation theorem}, replacing the unique singular point $Q$ of $Y$ by a singular point of $Y$, we obtain a singular point $Q\in \Sing(Y)$ such that $(X,P)\cong (Y,Q)$ as complex analytic germs.
\end{proof}

\begin{rem}
Applying Corollary \ref{Cor: Cancellation theorem for isolated singularities} to the inverse isomorphism $\varphi^{-1}: Y\times Z \xrightarrow{\sim} X\times Z$, it follows that the sets of analytic isomorphism classes of isolated singularity germs occurring on $X$ and $Y$ coincide. More precisely, for any analytic singularity type $\tau$, let
\[
\Sigma_X(\tau):=\{P\in \Sing(X)\mid (X,P)\text{ is of type }\tau\}
\quad \text{and} \quad
\Sigma_Y(\tau):=\{Q\in \Sing(Y)\mid (Y,Q)\text{ is of type }\tau\}.
\]
Then
\[
\Sigma_X(\tau)\neq \varnothing
\quad\iff\quad
\Sigma_Y(\tau)\neq \varnothing.
\]
In other words, $X$ and $Y$ have exactly the same strata of analytic singularity types.
\end{rem}

The following theorem of R. V. Gurjar provides an algebraization result for complex affine varieties admitting good $\C^*$-actions. Roughly speaking, it shows that analytic equivalence of the corresponding germs at the vertices already implies algebraic equivalence of the underlying varieties.

\begin{lem}[{cf. \cite[Theorem 2]{Gur2020}}]\label{Lem: Gurjar's result}
Let $R$ and $S$ be positively graded affine domains over $\C$. Let $V:=\Spec R$ and $W:=\Spec S$ be the corresponding complex affine varieties admitting good $\C^*$-actions with vertices $p$ and $q$, respectively (i.e., the closed points corresponding to the irrelevant maximal ideals of $R$ and $S$, respectively). If the complex analytic germs of $V$ and $W$ at $p$ and $q$, respectively, are isomorphic, then $V$ and $W$ are isomorphic as affine varieties (not necessarily as $\C^*$-varieties).
\end{lem}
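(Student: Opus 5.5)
The plan is to reduce everything to the formal completions at the vertices and exploit the $\C^*$-actions there. Put $\mathfrak m:=R_+$ and $\mathfrak n:=S_+$. Let $\widehat R$ and $\widehat S$ denote the $\mathfrak m$- and $\mathfrak n$-adic completions. By Theorem~\ref{Thm: Artin's approximation}, the hypothesis gives a $\C$-algebra isomorphism $\psi:\widehat S\xrightarrow{\sim}\widehat R$.

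\emph{Step 1: recover $R$ from its $\C^*$-action on $\widehat R$.} Since the grading is positive with $R_0=\C$, each $R_d$ is finite-dimensional and $\widehat R=\prod_{d\ge0}R_d$. The grading therefore extends to an algebraic $\C^*$-action $\lambda_R$ on $\widehat R$, meaning a compatible family of algebraic actions on the jets $\widehat R/\widehat{\mathfrak m}^k$. I will check that the $\lambda_R$-finite vectors of $\widehat R$ are exactly $\bigoplus_d R_d=R$. Transporting the grading of $S$ by $\psi$ gives a second good $\C^*$-action $\lambda_S$ on $\widehat R$, whose algebra of finite vectors is $\psi(S)\cong S$. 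So it suffices to find an automorphism $\sigma$ of $\widehat R$ carrying the $\lambda_S$-finite vectors onto the $\lambda_R$-finite vectors.

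\emph{Step 2: replace one-dimensional tori by maximal tori.} Let $G:=\operatorname{Aut}_{\C}(\widehat R)=\varprojlim_k G_k$, where $G_k$ is the image of $G$ in $\operatorname{Aut}(\widehat R/\widehat{\mathfrak m}^k)$. I would show that each $G_k$ is a linear algebraic group, and that the ranks of tori in $G_k$ are bounded uniformly in $k$, say by the embedding dimension of $\widehat R$, since a torus acts faithfully on $\mathfrak m/\mathfrak m^2$. Hence every torus of $G$, such as $\lambda_R(\C^*)$, lies in a maximal torus $T$ of $G$, obtained as a compatible system of maximal tori in the $G_k$.

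The key observation is the following. Suppose $\lambda\subseteq T$ is a one-parameter subgroup acting with positive weights on $\mathfrak m$. Then the $\lambda$-finite vectors coincide with the $T$-finite vectors. Indeed, $T$ commutes with $\lambda$, so $T$ preserves each $\lambda$-weight space; these are finite-dimensional by positivity, so $\lambda$-finite vectors are $T$-finite. Conversely, $T$-weight vectors are $\lambda$-weight vectors. Writing $A(T)$ for the algebra of $T$-finite vectors, this gives $R=A(T)$ and $\psi(S)=A(T')$ for maximal tori $T\supseteq\lambda_R$ and $T'\supseteq\lambda_S$.

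\emph{Step 3: conjugacy of maximal tori.} If $T'=\sigma T\sigma^{-1}$ with $\sigma\in G$, then $\sigma(A(T))=A(T')$, and hence
\[
R=A(T)\cong A(T')=\psi(S)\cong S,
\]
as abstract $\C$-algebras; the resulting isomorphism need not be equivariant. This step is the main obstacle. In each algebraic group $G_k$ the images of $T$ and $T'$ are maximal tori, hence conjugate. The set $C_k$ of conjugating elements is a nonempty coset space of the normalizer, and thus a nonempty algebraic variety, and the sets $C_k$ form an inverse system. I would first try a Mittag-Leffler argument: the images of $C_{k'}$ in $C_k$ form a decreasing chain of nonempty constructible sets, which must stabilize, so $\varprojlim C_k\neq\varnothing$. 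One also has to verify that the images of $T,T'$ in $G_k$ remain maximal for large $k$, which should follow from the rank bound in Step 2.

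If this limit argument proves delicate, a fallback is to cite the conjugacy of maximal tori in automorphism groups of complete local algebras (Hauser--M\"uller), which is the ingredient underlying \cite{Gur2020}. Affineness of $A(T)$ needs no extra work, since $A(T)=R$ is already finitely generated.
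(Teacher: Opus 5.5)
The paper gives no proof of this lemma. It is quoted from \cite[Theorem~2]{Gur2020} and used as a black box in the proof of Theorem~\ref{Thm: Our main cancellation theorem for C*-varieties}. Your sketch is therefore a genuinely independent route, and its overall structure is correct. $R$ is the algebra of $\lambda_R$-finite vectors in $\widehat R=\prod_d R_d$. Any commutative group $T\supseteq\lambda_R(\C^*)$ preserves the finite-dimensional weight spaces $R_d$, so $R\subseteq A(T)\subseteq A(\lambda_R)=R$. An element $\sigma$ with $\sigma T\sigma^{-1}=T'$ then gives $\sigma(R)=\psi(S)$.

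Compared with the citation, your route has three advantages. It is self-contained. It needs only a \emph{formal} isomorphism; combined with the formal isomorphism $\widehat{(Y,Q)}\cong\widehat{(X,P)}$ obtained in the proof of Theorem~\ref{Thm: Our main cancellation theorem}, the appeal to Artin approximation could be skipped. It never uses that $R$ and $S$ are domains, only finite generation, positivity of the grading, and $R_0=S_0=\C$.

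One justification in Step~3 is false as stated, although the conclusion survives. A decreasing chain of nonempty constructible sets need not stabilize: take $\C\setminus\{1,\dots,j\}$ for $j\ge 1$. What saves the argument is that $C_{k'}=g\,N_{G_{k'}}(T_{k'})$ is a coset. Its image in $G_k$ is therefore a coset of the closed subgroup $\pi_{k',k}(N_{G_{k'}}(T_{k'}))$. Nested cosets force nested subgroups, and these stabilize by the descending chain condition on closed subsets. The cosets then coincide, which gives the Mittag-Leffler property.

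The same device proves your unproved claim that $G_k$ is algebraic, with no approximation theorem. One has $\operatorname{Aut}_\C(\widehat R)=\varprojlim_k\operatorname{Aut}_\C(\widehat R/\widehat{\mathfrak m}^k)$. The images of $\operatorname{Aut}_\C(\widehat R/\widehat{\mathfrak m}^{k'})$ in $\operatorname{Aut}_\C(\widehat R/\widehat{\mathfrak m}^{k})$ are closed subgroups decreasing in $k'$, and their stable value is exactly $G_k$.

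Finally, the compatible system of maximal tori containing $\lambda_R$ needs an explicit lifting argument, which goes as follows.
\begin{itemize}
\item The ranks of the $G_k$ are nondecreasing, because the surjections $G_{k+1}\to G_k$ carry maximal tori onto maximal tori.
\item They are bounded by your faithfulness argument on $\mathfrak m/\mathfrak m^2$, which requires a torus-stable complement to $\mathfrak m^2$. Hence they are eventually constant.
\item Beyond that point, a maximal torus of $(\pi_{k+1,k}^{-1}(T_k))^0$ containing $\lambda_{k+1}$ maps onto $T_k$ and has full rank in $G_{k+1}$.
\end{itemize}
With these repairs the argument is complete.
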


We now combine Theorem~\ref{Thm: Our main cancellation theorem} with Gurjar's algebraization theorem (Lemma~\ref{Lem: Gurjar's result}) to deduce a generalized Zariski cancellation theorem for complex affine varieties admitting good $\C^*$-actions and having a unique singularity. This yields one of the main results of the article.

\begin{thm}\label{Thm: Our main cancellation theorem for C*-varieties}
Let $R$ and $S$ be positively graded affine domains over $\C$. Let $V:=\Spec R$ and $W:=\Spec S$ be the corresponding complex affine irreducible varieties admitting good $\C^*$-actions with vertices $p$ and $q$, respectively. Assume that $p$ and $q$ are the unique singularities of $V$ and $W$, respectively. Let $Z$ be a separated scheme over $\C$ (not necessarily connected) having a smooth point such that
\[
V\times Z \cong_{\C} W\times Z.
\]
Then $V$ and $W$ are isomorphic as complex affine varieties (not necessarily as $\C^*$-varieties).
\end{thm}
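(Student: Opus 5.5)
The plan is to combine Theorem~\ref{Thm: Our main cancellation theorem} with Gurjar's algebraization result, Lemma~\ref{Lem: Gurjar's result}. First we check that $V$ and $W$ satisfy the hypotheses of Theorem~\ref{Thm: Our main cancellation theorem}. Since $R$ and $S$ are domains, $V$ and $W$ are irreducible, hence connected, complex algebraic varieties. By assumption they have unique singular points, namely the vertices $p$ and $q$. The scheme $Z$ is separated over $\C$, has a smooth point, and satisfies $V\times Z\cong_\C W\times Z$.

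Theorem~\ref{Thm: Our main cancellation theorem} therefore applies with $X=V$, $P=p$, $Y=W$ and $Q=q$. It shows that the analytic germs $(V,p)$ and $(W,q)$ are analytically isomorphic.

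We then apply Lemma~\ref{Lem: Gurjar's result}. Here $R$ and $S$ are positively graded affine domains, and $p$ and $q$ are the vertices, corresponding to the irrelevant maximal ideals. Since the analytic germs at the vertices are isomorphic, the lemma gives $V\cong W$ as complex affine varieties, not necessarily equivariantly. This is exactly the assertion.

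The only subtle point is that Theorem~\ref{Thm: Our main cancellation theorem} needs a \emph{unique} singular point on each variety. This is precisely why the hypothesis that the vertices are the unique singularities is imposed; the rest is bookkeeping. There is one degenerate case to note: if $V$ were smooth, there would be no singular point at all. This is excluded by the hypothesis, so no separate treatment is needed.
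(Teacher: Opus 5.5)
Your proposal is correct and follows the paper's proof exactly. Theorem~\ref{Thm: Our main cancellation theorem} gives the analytic isomorphism $(V,p)\cong(W,q)$, and Gurjar's algebraization result (Lemma~\ref{Lem: Gurjar's result}) then upgrades it to $V\cong_{\C}W$; your explicit check that $V$ and $W$ are connected, being irreducible, verifies a hypothesis of the first theorem that the paper uses without comment.
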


\begin{proof}
By Theorem \ref{Thm: Our main cancellation theorem}, the isomorphism
\[
V\times Z \xrightarrow{\sim} W\times Z
\]
induces an analytic isomorphism of germs $(V,p)\cong (W,q)$. Since $V$ and $W$ admit good $\C^*$-actions with vertices $p$ and $q$, respectively, Gurjar's result (Lemma \ref{Lem: Gurjar's result}) implies that this analytic isomorphism extends to an algebraic isomorphism $V\cong_{\C} W$. Hence $V$ and $W$ are isomorphic as complex affine varieties.
\end{proof}

\begin{rem}
Observe that if $Z$ is further assumed to be smooth in Theorem~\ref{Thm: Our main cancellation theorem for C*-varieties}, then it suffices to assume that $p$ is the unique singular point of $V$. Indeed, since $Z$ is smooth,
\[
\Sing(V\times Z)=\Sing(V)\times Z=\{p\}\times Z
\quad
\text{and}\quad 
\Sing(W\times Z)=\Sing(W)\times Z.
\]
Hence the isomorphism $V\times Z\xrightarrow{\sim} W\times Z$ induces an isomorphism $Z\xrightarrow{\sim} \Sing(W)\times Z$.
Comparing dimensions, we obtain $\dim(\Sing(W))=0$. Since $\Sing(W)$ is a closed subvariety of the affine variety $W$, it follows that $\Sing(W)$ is a finite set of points. If $\Sing(W)=\{s_1,\ldots,s_r\}$ consisting of $r$ points ($r>0$), then
\[
\Sing(W)\times Z=\bigsqcup_{i=1}^{r} \left(\{s_i\}\times Z\right).
\]
Since $\Sing(W)\times Z\cong Z$, comparison of the numbers of connected components yields $r=1$. Thus $W$ has a unique singular point. Since the vertex $q$ is fixed by the good $\C^*$-action, this unique singular point must be $q$. Therefore the conclusion of Theorem~\ref{Thm: Our main cancellation theorem for C*-varieties} remains valid under the weaker assumption that only $p$ is the unique singular point of $V$.
\end{rem}

\subsection{Generalized Zariski cancellation for Brieskorn--Pham varieties}\label{subse3.3}

We now conclude this article by proving the generalized Zariski cancellation theorem for Brieskorn--Pham varieties.

\begin{thm}\label{Thm: Our main cancellation theorem for B-P varieties}
    Let $n\geq 2$, and let $V_\ba$ and $V_\bb$ be two Brieskorn--Pham varieties defined over $\C$ corresponding to two tuples $\ba,\bb\in(\N_{\geq 2})^n$. Then, for any separated $\C$-scheme $Z$ (not necessarily connected) having a smooth point, the following are equivalent:
    \begin{enumerate}[\indent\rm(1)]
        \item $V_\ba\times Z\cong_{\C} V_\bb\times Z$;
        \item $V_\ba\cong_{\C} V_\bb$;
        \item $V_\ba$ and $V_\bb$ are isomorphic as $\C^*$-varieties;
        \item $\ba \sim \bb$;
    \end{enumerate}
\end{thm}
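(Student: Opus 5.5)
The plan is to prove the cycle of implications $(4)\Rightarrow(3)\Rightarrow(2)\Rightarrow(1)\Rightarrow(4)$. The first three are essentially formal. If $\ba\sim\bb$, say $b_i=a_{\sigma(i)}$ for a permutation $\sigma$, then the coordinate permutation $X_i\mapsto X_{\sigma(i)}$ carries $f_\bb$ to $f_\ba$. This gives an isomorphism $B_\ba\cong B_\bb$ which sends $X_i$ of weight $N/b_i$ to $X_{\sigma(i)}$ of weight $N/a_{\sigma(i)}$. The lcm $N$ is the same for both tuples, so these weights agree, and the isomorphism is graded, i.e.\ $\C^*$-equivariant for the good actions of Subsection~\ref{subse2.1}. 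This gives $(3)$. Forgetting equivariance gives $(2)$, and taking the product with $\mathrm{id}_Z$ gives $(1)$.

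The substantive implication is $(1)\Rightarrow(4)$. I would not go through Theorem~\ref{Thm: Our main cancellation theorem for C*-varieties}, since that needs $V_\ba$ to be irreducible (a domain), which fails for instance for $n=2$, $\ba=(2,2)$. Instead I would apply Theorem~\ref{Thm: Our main cancellation theorem} directly, so I first check its hypotheses. $V_\ba$ is a variety: it is reduced because $f_\ba$ is squarefree, which follows since its partials $a_iX_i^{a_i-1}$ have common zero only at the origin, so no repeated factor exists when $n\geq 2$. It is connected because it is a cone under the good $\C^*$-action: every point is joined to $\oo_\ba$ by the closure of its orbit, as shown in Subsection~\ref{subse2.1}. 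By the Jacobian criterion, $\oo_\ba$ is its unique singular point, and the same holds for $V_\bb$ with $\oo_\bb$.

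Theorem~\ref{Thm: Our main cancellation theorem} then yields an analytic isomorphism of germs $V_\ba^{\rm an}\cong V_\bb^{\rm an}$. Corollary~\ref{Cor: Exponent rigidity of B-P variety in analytic setting over complex numbers} turns this into $\ba\sim\bb$. Spelled out, the analytic local algebras $B^{\rm an}_\ba\cong B^{\rm an}_\bb$ are isomorphic. This isomorphism preserves the singular-locus (Jacobian) ideal, whose quotient is $\C\{X\}/(X_1^{a_1-1},\ldots,X_n^{a_n-1})$, because the Euler relation puts $f_\ba$ in the ideal of partials. Lemma~\ref{Lem: Key Lemma} in its convergent-series form (Remark~\ref{Rem 1}) then identifies the tuples.

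The main obstacle I expect is the small-exponent degeneration in this last step. When some $a_i=2$, the factor $X_i^{a_i-1}=X_i$ makes the corresponding variable disappear, so Lemma~\ref{Lem: Key Lemma} must be applied with exponents $\geq 1$, which it allows. Its nilradical-graded Hilbert series argument still recovers the multiset $\{a_i-1\}$, because factors $1-t^{1}$ are counted. It also needs care that the analytic version of the singular-locus ideal, namely the Fitting/Jacobian ideal, is intrinsic, i.e.\ invariant under isomorphisms of the local algebras. I would justify this by the standard fact that the Jacobian ideal of a hypersurface germ is independent of the embedding up to contact equivalence.
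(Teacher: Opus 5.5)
Your proposal is correct and follows essentially the same route as the paper. Like the paper, you use the coordinate permutation for $(4)\Rightarrow(3)$ and, for $(1)\Rightarrow(4)$, Theorem~\ref{Thm: Our main cancellation theorem} followed by Corollary~\ref{Cor: Exponent rigidity of B-P variety in analytic setting over complex numbers} (the Jacobian quotient plus Lemma~\ref{Lem: Key Lemma}); your extra checks of reducedness and connectedness, and your caution about irreducibility, are well placed, since the paper's proof asserts that $B_\ba$ is a domain, which fails e.g.\ for $\ba=(2,2)$, though that assertion is never actually used.
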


\begin{proof}
With the earlier notations, let $V_\ba=\Spec B_\ba$ and $V_\bb=\Spec B_\bb$, where $B_\ba$ and $B_\bb$ are the corresponding Brieskorn--Pham hypersurface rings, and let $\oo_\ba$ and $\oo_\bb$ denote the unique singular points of $V_\ba$ and $V_\bb$, respectively. Recall from Section~\ref{subse2.1} that $B_\ba$ and $B_\bb$ are positively graded affine $\C$-domains. Consequently, $V_\ba$ and $V_\bb$ admit good $\C^*$-actions with vertices $\oo_\ba$ and $\oo_\bb$, respectively.

We first establish $\textup{(4)}\implies\textup{(3)}$. Assume that $\ba\sim\bb$. Then there exists a permutation $\sigma\in S_n$ such that $b_i=a_{\sigma(i)}$ for all $i=1,\ldots,n$. Let
\[
B_\ba=
\frac{\C[X_1,\ldots,X_n]}
{(X_1^{a_1}+\cdots+X_n^{a_n})}
\quad\text{and}\quad
B_\bb=
\frac{\C[Y_1,\ldots,Y_n]}
{(Y_1^{b_1}+\cdots+Y_n^{b_n})},
\]
and let $x_i$ and $y_i$ denote the images of $X_i$ and $Y_i$ in $B_\ba$ and $B_\bb$, respectively, for $i=1,\ldots,n$. Consider the $\C$-algebra homomorphism $\varphi:B_\bb\longrightarrow B_\ba$
defined by $\varphi(y_i)=x_{\sigma(i)}$ for all $i=1,\ldots,n$. Since $b_i=a_{\sigma(i)}$, we have
\[
\varphi\!\left(\sum_{i=1}^{n}y_i^{b_i}\right)
=
\sum_{i=1}^{n}x_{\sigma(i)}^{a_{\sigma(i)}}
=
\sum_{j=1}^{n}x_j^{a_j},
\]
and hence $\varphi$ is well-defined. Its inverse is induced by the inverse permutation $\sigma^{-1}$, so $\varphi$ is a $\C$-algebra isomorphism.

Let $N=\operatorname{lcm}(a_1,\ldots,a_n) =
\operatorname{lcm}(b_1,\ldots,b_n)$. For each $i=1,\ldots,n$, put $w_i=\frac{N}{a_i}$ and $w_i'=\frac{N}{b_i}$. By Section~\ref{subse2.1}, the good $\C^*$-actions on $V_\ba$ and $V_\bb$ are induced by the positive gradings on $B_\ba$ and $B_\bb$ determined by $\deg(x_i)=w_i$ and $\deg(y_i)=w_i'$, respectively. Since $b_i=a_{\sigma(i)}$, we obtain 
\[
\deg(y_i)=w_i'=\frac{N}{b_i}=\frac{N}{a_{\sigma(i)}}=w_{\sigma(i)}=\deg(x_{\sigma(i)})
\]
for every $i=1,\ldots,n$. Therefore, $\varphi$ preserves the degrees, and thus it is a graded isomorphism of graded $\C$-algebras. Moreover, for every $t\in\C^*$ and every generator $y_i$ of $B_\bb$, we have
\[
\varphi(t\cdot y_i)=\varphi(t^{w_i'}y_i)=t^{w_i'}x_{\sigma(i)}=t^{w_{\sigma(i)}}x_{\sigma(i)}=t\cdot x_{\sigma(i)}=t\cdot\varphi(y_i).
\]
Since $y_1,\ldots,y_n$ generate $B_\bb$ as a $\C$-algebra, it follows that $\varphi$ intertwines the corresponding $\C^*$-actions on $B_\bb$ and $B_\ba$. Consequently, the induced isomorphism
\[
\varphi^*:V_\ba\longrightarrow V_\bb
\]
is $\C^*$-equivariant. Thus $V_\ba$ and $V_\bb$ are isomorphic as $\C^*$-varieties.

The implications $\rm(3)\implies\rm(2)\implies\rm(1)$ are straightforward. Finally, the implication $\textup{(1)}\implies\textup{(4)}$ follows from Theorem~\ref{Thm: Our main cancellation theorem} together with Corollary~\ref{Cor: Exponent rigidity of B-P variety in analytic setting over complex numbers}. This completes the proof.
\end{proof}

\section*{\bf Acknowledgements}
Both authors would like to thank Prof. R. V. Gurjar and Prof. A. J. Parameswaran for several helpful discussions. This collaboration began while both authors were postdoctoral fellows at the Chennai Mathematical Institute (CMI), India.

The first-named author acknowledges financial support from the National Board for Higher Mathematics (NBHM), Department of Atomic Energy, Government of India, through a Postdoctoral Fellowship during his tenure at CMI. He also acknowledges current financial support from the Department of Science and Technology, Government of India, through the INSPIRE Faculty Fellowship (Reference No. DST/INSPIRE/04/2024/003379, Faculty Registration No. IFA24--MA 213).

The second-named author acknowledges 
the support of the project 
``Singularities and Applications'' 
- CF 132/31.07.2023 funded by the European Union - NextGenerationEU - through Romania's National Recovery and Resilience Plan, and the support of the grant CNRS-INSMI-IEA-329.

\section*{\bf Data Availability Statement}

Data sharing does not apply to this article, as no datasets were generated or analysed during the current study.

\section*{\bf Declarations}

The authors declare that they have no competing interests. No additional funding was received for this work other than that acknowledged above.

\bibliographystyle{alpha}
\bibliography{ref}
\end{document}